\documentclass[review,hidelinks,onefignum,onetabnum]{siamart251216}

\usepackage{lipsum}
\usepackage{amsfonts}
\usepackage{graphicx}
\usepackage{epstopdf}
\usepackage{algorithmic}

\usepackage{tikz}
\usepackage{subcaption}
\usepackage{bm}

\newcommand{\Vx}{\mathbf{x}}

\newcommand{\Vb}{\mathbf{b}}
\newcommand{\Vw}{\mathbf{w}}

\newcommand{\Vz}{\mathbf{z}}

\ifpdf
  \DeclareGraphicsExtensions{.eps,.pdf,.png,.jpg}
\else
  \DeclareGraphicsExtensions{.eps}
\fi

\newsiamremark{remark}{Remark}
\newsiamremark{hypothesis}{Hypothesis}
\crefname{hypothesis}{Hypothesis}{Hypotheses}
\newsiamthm{claim}{Claim}
\newsiamremark{fact}{Fact}
\crefname{fact}{Fact}{Facts}

\headers{Neural network with adaptive test spaces}{T. Udomworarat, I. Brevis, K. G.  van der Zee, and S. Rojas}

\title{Neural network approximation in discrete dual norms with adaptive test spaces}

\author{Tanakorn Udomworarat$^{\mathsection,}$\thanks{School of Mathematical Sciences, University of Nottingham, United Kingdom
(\email{tanakorn.udomworarat@nottingham.ac.uk},\email{ Ignacio.Brevis@nottingham.ac.uk},\email{ KG.vanderZee@nottingham.ac.uk}).}
\and Ignacio Brevis\footnotemark[1]
\and Kristoffer G. van der Zee\footnotemark[1]
\and Sergio Rojas\thanks{School of Mathematics, Monash University, Australia 
  (\email{ Sergio.rojas@monash.edu}).}}
{
\footnotetext{\hspace{-0.8em}$^\mathsection$Corresponding author.}}

\usepackage{amsopn}

\ifpdf
\hypersetup{
  pdftitle={An Example Article},
  pdfauthor={D. Doe, P. T. Frank, and J. E. Smith}
}
\fi

\begin{document}

\maketitle

\begin{abstract}
In robust variational physics-informed neural networks (RVPINNs), the loss function is formulated in terms of the Riesz representative of the variational residual within a discrete test space. This approach guarantees that the loss function is robust with respect to the true error in the energy norm up to a remainder term that depends on both the neural network approximation and the discrete space configuration. However, in problems with localized singularities, steep gradients, or interface layers, a fixed coarse test space may fail to resolve the continuous Riesz representative of the residual during training.
Although this can be avoided by using a sufficiently fine test space from the start, doing so may be computationally inefficient. We therefore propose an adaptive algorithm that enriches the test space only where the error between the discrete and continuous Riesz representatives is pronounced. We establish theoretical adaptive strategies within the RVPINN framework and derive their error bounds. Furthermore, we propose a computable refinement indicator and prove that, under the saturation assumption, it serves as a reliable and efficient error estimator for the non-computable discrepancy between the discrete and continuous Riesz representatives. Finally, we propose a practical adaptive algorithm and demonstrate its effectiveness through numerical experiments on elliptic Dirichlet problems.
\end{abstract}

\begin{keywords}
Robustness, Variational physics-informed neural networks, Riesz representative, Adaptivity
\end{keywords}

\begin{MSCcodes}
68T07, 65N50, 68W40, 65N12
\end{MSCcodes}

\section{Introduction}
Deep learning methodologies have been widely applied to various areas of scientific computing over the past decade (see, for example,  \cite{aldirany2024multi,brunton2020machine,libbrecht2015machine,sirignano2018dgm,udomworarat2026neural,yu2018deep}). 
One popular application in this area is the use of neural networks to approximate solutions of partial differential equations (PDEs). This line of research gained significant attention with the introduction of physics-informed neural networks (PINNs) \cite{raissi2019physics} in 2019.  
The idea of PINNs is to generate training data by incorporating physical laws expressed as PDEs.
The neural network is trained using this data, which is on a mesh-free set of collocation points sampled from the physical domain and its boundary. PINNs have been successfully applied to a broad range of problems, including fluid dynamics \cite{cai2021physics,mao2020physics}, wave propagation \cite{rasht2022physics}, and inverse problems \cite{chen2020physics}. 
However, a caveat of this method is that it employs a strong-form residual to approximate the solution, whereas for many PDEs, the solution is only meaningful in a variational sense.
In addition, the mesh-free nature of PINNs is not always an advantage, as approximating solutions in complex scenarios, such as singular solutions or with steep gradients, can lead to convergence issues due to improper balancing of loss terms~\cite{fuhrer2025posteriori}, making training too expensive or impractical.

As an alternative to the strong-form residual, variational PINNs (VPINNs) \cite{kharazmi2019variational} and $hp$-VPINNs \cite{kharazmi2021hp} were introduced to solve PDEs in their weak forms.  
While these approaches align with weak solutions, they depend on the choice of basis functions and, consequently, their loss functions may not be robust with respect to the true error. 
Subsequently, robust VPINNs (RVPINNs) \cite{rojas2024robust} were proposed to overcome
\pagebreak these limitations.  
In this framework, the loss function is defined through a discrete Riesz representative of the residual. This construction ensures that the loss function acts as a reliable and efficient estimator 
for the true error up to a remainder term, which depends on the neural network approximation and the discretization of the test space. When standard FEM functions are used as test functions, the underlying mesh provides a natural framework for adaptive refinement that improves approximation quality. 

An analysis of the quadrature rules and piecewise-polynomial degrees of the test space for VPINNs was conducted in \cite{berrone2022variational}, highlighting the critical role of test functions and numerical integration on the error decay rate. Several adaptive strategies for VPINNs have been developed, incorporating error estimators to guide test-space refinements. 
To address the dependence on the choice of basis functions, Berrone et al. \cite{berrone2024meshfree} proposed a mesh-free approach in which the test-space mesh is no longer required. Instead, an a posteriori error estimator is employed to enrich the test space by incorporating new test functions. Additionally, Hu et al. \cite{HU2026118876} introduced an error estimator for stationary Navier-Stokes equations to guide adaptive mesh refinement within the VPINN framework. A rigorous a posteriori analysis was carried out in \cite{fuhrer2025posteriori}, with applications to adaptive refinement for neural network approximations. 
Beyond test space refinement, adaptive sampling of collocation points based on PDE residuals \cite{gao2024failure,gao2023failure,tang2023pinns,zhang2025annealed} has been studied to enhance the numerical accuracy of PINNs. These adaptive strategies aim to improve accuracy and convergence rates for neural network-based PDE solvers, particularly when approximating solutions with singularities and sharp gradients.

In RVPINNs, robustness is ensured only if the remainder is kept under control. 
In practice, this means that for a fixed mesh, continued training drives the neural network toward a solution determined by the discrete test space, rather than the true solution of the continuous variational formulation. 
If the mesh is too coarse, it might not be enough to control the true error during the training. A fully robust control can be obtained by incorporating a computable upper bound for the remainder into the loss \cite{fuhrer2025posteriori}. However, this implies a major training cost as a natural consequence of including additional terms to the loss function. Another way to ensure full control is to use a sufficiently fine mesh; however, this becomes impractical in complex scenarios involving singularities or steep gradients. 

We aim to accelerate convergence for RVPINNs in these complex problems. We extend the RVPINN framework with an adaptive test-space refinement strategy. 
This approach ensures robust training throughout this process without incorporating extra control terms into the loss function. The loss function is formulated in terms of the discrete Riesz representative of the variational residual, maintaining consistency with the original RVPINN methodology. 
The boundary term, when necessary, is defined by the trace norm of the boundary residual, following the approach in \cite{fuhrer2025posteriori}. To ensure that the discrete Riesz representative accurately approximates its continuous counterpart for a given neural network approximation, we monitor the discrepancy between the discrete and continuous Riesz representatives during training and refine the mesh whenever this discrepancy exceeds a prescribed tolerance.
The computable refinement indicator is based on the local difference between the discrete Riesz representatives computed in the original test space and in an enriched test space with a higher polynomial degree.

The main contributions of this work are as follows. We present upper bounds on the approximation error for the RVPINN loss function and prove their equivalence. 
We establish theoretical adaptive algorithms for RVPINNs with their convergence results.
Moreover, we propose a computable refinement indicator derived from two discrete Riesz representatives associated with two nested test spaces. We show that, under the saturation assumption \cite[Section 5.2]{ainsworth2000posteriori}, this indicator is a reliable and efficient error estimator for the discrepancy between the discrete and continuous Riesz representatives. We develop a practical test-space adaptive RVPINN algorithm and demonstrate its effectiveness by solving a Poisson problem on a square domain with a smooth solution, an elliptic interface problem with a discontinuous coefficient, and a Poisson equation on an L-shaped domain with a singular solution. 

Our adaptive strategies are motivated by the double adaptivity paradigm of Demkowicz et al. \cite{demkowicz2021double}. To make a connection with their framework, we note that their inner loop corresponds to the test space adaptation in our adaptive RVPINN strategy. However, while the outer loop in the double adaptivity paradigm refines a discrete trial mesh, it is replaced in our framework by neural network training, which optimizes the approximate solution within the fixed trial space parameterized by the network.

The remainder of this article is structured as follows. In Section \ref{sec: model}, we provide the notation and motivational problem for this study. In Section \ref{Sec:Pre}, we introduce the variational problem of PDEs and the neural network framework used throughout this work. We review the standard RVPINN methodology and discuss the loss function for problems in which boundary conditions are imposed weakly via an additional residual. In Section \ref{sec: Error estimates}, we
provide two types of error upper bounds and 
prove their equivalence. In Section \ref{sec: ideal adap}, we introduce idealized and sequential adaptivity algorithms and derive their error estimates. These results guarantee convergence of neural network training with RVPINNs under certain conditions, using the proposed adaptive strategies. In addition, we introduce the refinement indicator proposed for the adaptive framework.  
In Section \ref{sec:results}, we present a practical algorithm for the test-space adaptive RVPINNs and discuss the numerical experiments validating the proposed methodology. Section \ref{Sec:Conc}  summarizes the conclusions obtained from this work and outlines possible directions for future research.

\section{Motivational problem}\label{sec: model}

In this section, we introduce the notation used throughout the paper and present a general second-order elliptic Dirichlet problem that motivates the variational settings considered later. We first recall the classical weak formulation with strongly imposed boundary conditions. We then describe an alternative formulation in which the Dirichlet condition is imposed weakly through an additional boundary residual. The specific benchmark problems used in the numerical experiments are described later in Section~\ref{sec:results}.

\subsection{Notation}

Let $\Omega\subset\mathbb{R}^d$, $d\ge 1$, be a bounded Lipschitz domain with boundary $\Gamma:=\partial\Omega$. We denote by $L^2(\Omega)$ the space of real-valued square-integrable functions on $\Omega$, equipped with the inner product $(\cdot,\cdot)_{L^2(\Omega)}$ and norm $\|\cdot\|_{L^2(\Omega)}$. We also use $L^2(\Omega;\mathbb{R}^d)$ for vector-valued square-integrable functions, with the inner product and norm
\begin{equation*}
    (\mathbf w,\mathbf v)_{L^2(\Omega;\mathbb{R}^d)}
    :=\int_\Omega \mathbf w\cdot \mathbf v\,dx,
    \qquad \|\mathbf{v}\|_{L^2(\Omega;\mathbb{R}^d)}:=(\mathbf{v},\mathbf{v})^{1/2}_{L^2(\Omega;\mathbb{R}^d)}.
\end{equation*}
For $s\ge 0$, $H^s(\Omega)$ denotes the usual Sobolev space. In particular,
\begin{equation*}
    H^1(\Omega):=\{v\in L^2(\Omega):\nabla v\in L^2(\Omega;\mathbb{R}^d)\},
\end{equation*}
with the norm
\begin{equation*}
    \|v\|_{H^1(\Omega)}^2
    :=\|v\|_{L^2(\Omega)}^2+\|\nabla v\|_{L^2(\Omega;\mathbb{R}^d)}^2.
\end{equation*}
The subspace of functions with vanishing trace is denoted by
\begin{equation*}
    H_0^1(\Omega):=\{v\in H^1(\Omega):v|_\Gamma=0\},
\end{equation*}
and is equipped with the energy the inner product and norm
\begin{equation*}
    (v,w)_{H_0^1(\Omega)}:=(\nabla v,\nabla w)_{L^2(\Omega;\mathbb{R}^d)},
    \qquad
    \|v\|_{H_0^1(\Omega)}:=\|\nabla v\|_{L^2(\Omega;\mathbb{R}^d)}.
\end{equation*}
We also use the space
\begin{equation*}
    \mathbf H(\mathrm{div};\Omega)
    :=\{\mathbf v\in L^2(\Omega;\mathbb{R}^d):\nabla\cdot\mathbf v\in L^2(\Omega)\},
\end{equation*}
with the inner product
\begin{equation*}
    (\mathbf w,\mathbf v)_{\mathbf H(\mathrm{div};\Omega)}
    :=(\mathbf w,\mathbf v)_{L^2(\Omega;\mathbb{R}^d)}
      +(\nabla\cdot\mathbf w,\nabla\cdot\mathbf v)_{L^2(\Omega)}.
\end{equation*}
For $u\in H^1(\Omega)$, the trace $u|_\Gamma$ can be characterised through the generalized Green identity
\begin{equation*}
    \langle u|_\Gamma,\mathbf v\rangle_\Gamma
    :=(\nabla\cdot\mathbf v,u)_{L^2(\Omega)}
      +(\mathbf v,\nabla u)_{L^2(\Omega;\mathbb{R}^d)},
    \qquad \mathbf v\in\mathbf H(\mathrm{div};\Omega).
\end{equation*}
Here $\langle\cdot,\cdot\rangle_\Gamma$ denotes the dual pairing between the trace space $H^{1/2}(\Gamma):=\{u|_\Gamma:u \in H^1(\Omega)\}$ and its dual, realised through $\mathbf H(\mathrm{div};\Omega)$. We use the trace norm
\begin{equation*}
    \|\widehat u\|_{1/2,\Gamma}
    :=\sup_{0\neq\mathbf v\in\mathbf H(\mathrm{div};\Omega)}
    \frac{\langle \widehat u,\mathbf v\rangle_\Gamma}{\|\mathbf v\|_{\mathbf H(\mathrm{div};\Omega)}},
    \qquad \widehat u\in H^{1/2}(\Gamma).
\end{equation*}

For a triangulation $\mathcal T_k$ of $\Omega$, $\mathbb P^p(T)$ denotes the space of polynomials of total degree at most $p$ on an element $T\in\mathcal T_k$, and
\begin{equation}\label{Eq: P_o^p}
    \mathbb P_0^p(\mathcal T_k)
    :=\{v\in C^0(\overline\Omega):v|_T\in\mathbb P^p(T)\ \forall T\in\mathcal T_k,\ v|_\Gamma=0\}.
\end{equation}
We denote by $\mathbf{RT}^p(\mathcal T_k)$ the Raviart--Thomas space of order $p$ on $\mathcal T_k$ (cf. \cite[Section 2.3.1]{boffi2013mixed}).

\subsection{A model elliptic Dirichlet problem}

Let $a\in L^\infty(\Omega)$ satisfy the uniform ellipticity condition
\begin{equation}\label{eq:ellipticity_a}
    0<a_{\min}\le a(x)\le a_{\max}<\infty
    \qquad \text{for a.e. }x\in\Omega.
\end{equation}
Given $f\in L^2(\Omega)$ and $g\in H^{1/2}(\Gamma)$, we consider the model problem: find $u$ such that
\begin{equation}\label{eq:model_elliptic_pde}
    \begin{cases}
        \begin{aligned}
        -\nabla\cdot (a\nabla  u) &= f &&\textrm{in } \Omega, \\
        u &= g &&\textrm{on } \Gamma.
        \end{aligned}
    \end{cases}
\end{equation}
This problem includes the Poisson examples used later by taking $a\equiv 1$, and it also covers the interface example by allowing $a$ to be piecewise constant.

\subsection{Classical formulation with strongly imposed boundary conditions}

Let $w_g\in H^1(\Omega)$ be a lifting of the boundary datum, i.e., $w_g|_\Gamma=g$. Writing $u=u_0+w_g$ with $u_0\in H_0^1(\Omega)$, the classical weak formulation is: find $u_0\in H_0^1(\Omega)$ such that
\begin{equation}\label{eq:strong_bc_model}
    b_0(u_0,v)=l_0(v)
    \qquad \forall v\in H_0^1(\Omega),
\end{equation}
where
\begin{equation}\label{eq:formulation_strongly_impose}
    b_0(w,v):=(a\nabla w,\nabla v)_{L^2(\Omega;\mathbb R^d)},
    \qquad
    l_0(v):=(f,v)_{L^2(\Omega)}-(a\nabla w_g,\nabla v)_{L^2(\Omega;\mathbb R^d)}.
\end{equation}
For this formulation, it is natural to consider the weighted energy inner product and norm
\begin{equation}\label{eq:weighted_energy_inner_product}
    (w,v)_{a,\Omega}:=(a^{1/2}\nabla w,a^{1/2}\nabla v)_{L^2(\Omega;\mathbb R^d)},
    \qquad
    \|v\|_{a,\Omega}:=(v,v)_{a,\Omega}^{1/2}.
\end{equation}
The norm $\|\cdot\|_{a,\Omega}$ is equivalent to the standard $H_0^1(\Omega)$ energy norm by the ellipticity bounds in \eqref{eq:ellipticity_a}.
We take $U=V=H_0^1(\Omega)$ and $\|v\|_U=\|v\|_V=\|v\|_{a,\Omega}$.
With this choice, the bilinear form is exactly the weighted inner product $b_0(w,v)=(w,v)_{a,\Omega}$.
Consequently,
\begin{equation}\label{eq:bound_inf_sup_strong_impose}
    b_0(w,v)\le \|w\|_{a,\Omega}\|v\|_{a,\Omega},
    \qquad \textrm{and} \qquad
    \sup_{0\neq v\in H_0^1(\Omega)}\frac{b_0(w,v)}{\|v\|_{a,\Omega}}=\|w\|_{a,\Omega}.
\end{equation}

\subsection{Weak imposition through a composite residual}\label{sec: composite residual}

Strongly enforcing the Dirichlet condition through the neural network architecture can be inconvenient in complex domains or for non-homogeneous boundary data. An alternative is to keep the trial space as $U=H^1(\Omega)$ and impose the boundary condition weakly through an additional residual. We equip $U$ with the boundary-augmented weighted energy norm
\begin{equation}\label{eq:U_norm_weak_bc}
    \|w\|_U^2
    :=\|w\|_{a,\Omega}^2
      +\|w|_\Gamma\|_{1/2,\Gamma}^2,
    \qquad w\in H^1(\Omega),
\end{equation}
which is equivalent to the usual $H^1(\Omega)$ norm on bounded Lipschitz domains. Define
\begin{equation*}
    V_\Omega:=H_0^1(\Omega),
    \qquad
    V_\Gamma:=\mathbf H(\mathrm{div};\Omega),
    \qquad
    V:=V_\Omega\times V_\Gamma,
\end{equation*}
where the interior test space is equipped with the same weighted energy norm,
\begin{equation*}
    \|v\|_{V_\Omega}:=\|v\|_{a,\Omega}
    =\|a^{1/2}\nabla v\|_{L^2(\Omega;\mathbb R^d)}.
\end{equation*}
The product norm is
\begin{equation*}
    \|(v,\mathbf q)\|_V^2
    :=\|v\|_{a,\Omega}^2+\|\mathbf q\|_{\mathbf H(\mathrm{div};\Omega)}^2.
\end{equation*}
Strictly speaking, in the boundary component, only the normal trace of $\mathbf {q} $ on $\Gamma$ is seen by the residual. Thus, $V_\Gamma$ may equivalently be understood as the quotient of $\mathbf H(\mathrm{div};\Omega)$ by the kernel of the normal-trace map. In the numerical method, Raviart--Thomas functions are used as concrete representatives of this quotient space.

For $w\in H^1(\Omega)$ and $(v,\mathbf q)\in V$, set
\begin{align}
    b(w,(v,\mathbf q))
    &:=(a\nabla w,\nabla v)_{L^2(\Omega;\mathbb R^d)}
      +\langle w|_\Gamma,\mathbf q\rangle_\Gamma,\label{eq:composite_b_l_model}\\
      l(v,\mathbf q)
    &:=(f,v)_{L^2(\Omega)}+\langle g,\mathbf q\rangle_\Gamma.\nonumber
\end{align}
The corresponding residual is
\begin{equation}\label{eq:composite_residual_model}
    r(w,(v,\mathbf q))
    =r_\Omega(w,v)+r_\Gamma(w,\mathbf q),
\end{equation}
where
\begin{equation*}
    r_\Omega(w,v):=(f,v)_{L^2(\Omega)}-(a\nabla w,\nabla v)_{L^2(\Omega;\mathbb R^d)},
    \qquad
    r_\Gamma(w,\mathbf q):=\langle (g-w)|_\Gamma,\mathbf q\rangle_\Gamma.
\end{equation*}
The weakly imposed formulation is: find $u\in H^1(\Omega)$ such that
\begin{equation}\label{eq:weak_bc_model}
    r_\Omega(u,v)=0\quad\forall v\in H_0^1(\Omega),
    \qquad
    r_\Gamma(u,\mathbf q)=0\quad\forall \mathbf q\in\mathbf H(\mathrm{div};\Omega).
\end{equation}
The second equation enforces $u|_\Gamma=g$ in $H^{1/2}(\Gamma)$. Therefore, \eqref{eq:weak_bc_model} is equivalent to the classical Dirichlet problem \eqref{eq:model_elliptic_pde}.

By the Cauchy--Schwarz inequality in the weighted energy inner product and by the definition of the trial space norm and the product test norm, we obtain
\begin{align}
    b(w,(v,\mathbf q))
    &\le \|w\|_{a,\Omega}\|v\|_{a,\Omega}
      +\|w|_\Gamma\|_{1/2,\Gamma}\|\mathbf q\|_{\mathbf H(\mathrm{div};\Omega)}
    \le \|w\|_U\,\|(v,\mathbf q)\|_V.
    \label{eq:composite_boundedness}
\end{align}
In addition, let $\widehat w:=w|_\Gamma$ and let $H_a\widehat w\in H^1(\Omega)$ denote the $a$-harmonic lifting of $\widehat w$, defined by
\begin{equation*}
    (H_a\widehat w)|_\Gamma=\widehat w
    \qquad \textrm{and} \qquad
    (a\nabla H_a\widehat w,\nabla v)_{L^2(\Omega;\mathbb R^d)}=0
    \quad\forall v\in H_0^1(\Omega).
\end{equation*}
Set $w_0:=w-H_a\widehat w\in H_0^1(\Omega)$. Then
\begin{equation*}
    b(w,(v,\mathbf q))
    =(a^{1/2}\nabla w_0,a^{1/2}\nabla v)_{L^2(\Omega;\mathbb R^d)}
    +\langle \widehat w,\mathbf q\rangle_\Gamma.
\end{equation*}
By the Cauchy--Schwarz inequality, we obtain
\begin{align*}
    b(w,(v,\mathbf q)) &\leq \|w_0\|_{a,\Omega}\|v\|_{a,\Omega} + \|\widehat w\|_{1/2,\Gamma}\|\mathbf{q}\|_{\mathbf{H}(\textrm{div};\Omega)} \\
    &\leq \left(
    \|w_0\|_{a,\Omega}^2
    +\|\widehat w\|_{1/2,\Gamma}^2
    \right)^{1/2}\|(v,\mathbf q)\|_V.
\end{align*}
Thus, the supremum over $V=V_\Omega\times V_\Gamma$ gives
\begin{align}
    \sup_{0\neq (v,\mathbf q)\in V}
    \frac{b(w,(v,\mathbf q))}{\|(v,\mathbf q)\|_V}
    &=
    \left(
    \|w_0\|_{a,\Omega}^2
    +\|\widehat w\|_{1/2,\Gamma}^2
    \right)^{1/2}.
    \label{eq:composite_infsup_intermediate}
\end{align}
The $a$-harmonic lifting is stable in the weighted energy norm. In particular, there exists $C_H>0$, depending only on $\Omega$ and the ellipticity bounds of $a$, such that
\begin{equation*}
    \|H_a\widehat w\|_{a,\Omega}\le C_H\|\widehat w\|_{1/2,\Gamma}.
\end{equation*}
Moreover, $w_0$ is $a$-orthogonal to $H_a\widehat w$, and therefore
\begin{equation*}
    \|w\|_{a,\Omega}^2
    =\|w_0\|_{a,\Omega}^2+\|H_a\widehat w\|_{a,\Omega}^2.
\end{equation*}
Consequently,
\begin{equation*}
    \|w\|_U
    \le C
    \left(
    \|w_0\|_{a,\Omega}^2
    +\|\widehat w\|_{1/2,\Gamma}^2
    \right)^{1/2},
    \qquad
    C:=\sqrt{\max\{1,1+C_H^2\}}.
\end{equation*}
Combining this estimate with \eqref{eq:composite_infsup_intermediate} gives
\begin{equation}\label{eq:composite_infsup}
    \sup_{0\neq (v,\mathbf q)\in V}
    \frac{b(w,(v,\mathbf q))}{\|(v,\mathbf q)\|_V}
    \ge C^{-1}\|w\|_U.
\end{equation}

\section{Robust variational physics-informed neural networks (RVPINNs)}\label{Sec:Pre} 

\subsection{Abstract framework}\label{sec:abstract} 

Let $U$ and $V$ be Hilbert spaces equipped with norms $\|\cdot\|_U$ and $\|\cdot\|_V$, respectively. Let $V'$ denote the dual space of $V$. We consider a PDE problem that admits the following variational formulation:
\begin{equation}\label{eq:weakPDE}
    \text{Find } u \in U \text{ such that } b(u,v) = l(v), \quad \forall v \in V,
\end{equation}
where $b(\cdot,\cdot)$ is a bilinear form in $U \times V$ and $l(\cdot)\in V'$ is a bounded linear functional satisfying the following assumptions:
\begin{enumerate}
    \item Boundedness of $b(\cdot,\cdot)$: there is a constant $\mu>0$ such that
    \begin{equation}
        b(w,v) \leq \mu\|w\|_U\|v\|_V \quad \forall w \in U, v \in V, \label{eq:bound_cont}
    \end{equation}
    \item Inf-sup stability: there is a constant $\alpha>0$ such that
    \begin{equation}
        \sup\limits_{0 \neq v \in V} \dfrac{b(w,v)}{\|v\|_V} \geq \alpha \|w\|_U \quad \forall w \in U, \label{eq:inf_sup_cont}
    \end{equation} 
    \item Adjoint injectivity: \begin{equation}
        b(w,v) = 0 \quad \forall w \in U \Longrightarrow v = 0. \label{eq:A_prime_kernel}
    \end{equation}
\end{enumerate}

Under these assumptions, Problem~\eqref{eq:weakPDE} admits a unique solution $u\in U$ by the Banach–Ne\v{c}as–Bab\v{u}ska Theorem. 
Furthermore, the conditions~\eqref{eq:inf_sup_cont} and \eqref{eq:A_prime_kernel} are well-known to imply the adjoint inf-sup condition (see, e.g., \cite[Theorem 1]{demkowicz2006babuvska}):
\begin{equation}\label{eq:inf_sup_cont_dual}
\sup_{0 \neq w \in U} \dfrac{b(w, v)}{\|w\|_U} \geq \alpha \|v\|_V, \quad \forall \, v \in V.
\end{equation}

\begin{remark}
    For the motivational problem, formulation \eqref{eq:formulation_strongly_impose} satisfies these assumptions with constants $\mu=1$ and $\alpha=1$ when the weighted energy norm is used (see \eqref{eq:bound_inf_sup_strong_impose}). The boundedness and inf-sup stability assumptions for the composite bilinear form \eqref{eq:composite_b_l_model} are satisfied with $\mu=1$ and $\alpha = C^{-1}$ (see \eqref{eq:composite_boundedness} and \eqref{eq:composite_infsup}). The adjoint injectivity assumption also holds: if $b(w,(v,\mathbf q))=0$ for all $w\in H^1(\Omega)$, then testing first with $w\in H_0^1(\Omega)$ gives $v=0$ by coercivity in the weighted inner product, and the remaining boundary relation gives $\mathbf q=0$ in the quotient trace space.
\end{remark}

Let $(\cdot,\cdot)_V$ be the inner product inducing the norm $\|\cdot\|_V$ and define the dual norm of $V'$ by
\begin{equation}\label{eq:dual_norm_continuous}
\|l(\cdot)\|_{V^\prime} := \sup_{0 \neq v \in V} \dfrac{l(v)}{\|v\|_V}.
\end{equation} 
For any $w \in U$, we define $r(w,\cdot) := l(\cdot)-b(w,\cdot) \in V'$ as the variational residual with respect to $w$. According to the Riesz-Fr\'echet representation theorem (see~\cite[Theorem 5.5]{brezis2011functional}), there exists a unique Riesz representative $\phi(w)\in V$ such that:
\begin{equation}\label{eq:Riesz}
\left(\phi(w), v\right)_V = r(w, v), \quad \forall \, v \in V, \quad \text{ and } \quad \|\phi(w)\|_V = \|r(w, \cdot)\|_{V^\prime}.
\end{equation}
As a consequence of the boundedness of $b(\cdot,\cdot)$ and the inf-sup stability, the following inequality can be proven:
\begin{equation}\label{eq:robust estimation}
    \dfrac{1}{\mu} \|\phi(w)\|_{V} \leq \|u-w\|_U \leq \dfrac{1}{\alpha} \|\phi(w)\|_{V}.
\end{equation}
Inequality \eqref{eq:robust estimation} implies that $\|\phi(w)\|_V$ is a robust estimator of the true error $\|u-w\|_U$.

\subsection{Neural network framework}

To approximate the solution to Problem \eqref{eq:weakPDE}, we employ a Deep Neural Network (DNN) $u_\theta$ parametrized by $\theta \in \mathbb{R}^S$, which comprises the network's weights and biases. The neural network maps an input vector $\Vx = (x_1,\dots,x_d)$ to a scalar output $u_{\theta}(\Vx)$. In this study, we use a fully connected feedforward architecture consisting of $L$ layers. 
In this architecture, each layer $j \in \{ 1, \dots, L-1\}$ consists of neurons that compute a weighted sum of their inputs plus a bias, followed by a nonlinear activation function $\sigma$ (e.g., the hyperbolic tangent). Specifically, the output of the $j$-th layer is given by:
\begin{equation}
\Vz^{(j)} = \sigma(\Vw^{(j)} \Vz^{(j-1)} + \Vb^{(j)}), \quad j=1,\dots,L-1,
\end{equation}
where $\Vw^{(j)}$ and $\Vb^{(j)}$ are the weight matrix and bias vector for layer $j$, respectively, and $\Vz^{(0)} = \Vx$ is the network input. The final layer $L$ utilizes the identity function as its activation to produce the network output:
\begin{equation}
\bar u_{\theta}(\Vx) =  \Vw^{(L)} \Vz^{(L-1)} + \Vb^{(L)}.
\end{equation}
When homogeneous Dirichlet boundary conditions are imposed strongly on the neural network architecture, 
\begin{equation*}
u_\theta(\Vx) := \beta(\Vx) \, \bar u_{\theta}(\Vx),
\end{equation*}
where \(\beta\) is a prescribed function that vanishes on \(\Gamma\), otherwise \(\beta \equiv 1\). The optimal parameters $\theta$ are identified by minimizing a suitable loss functional through iterative training.

We denote the set of all functions representable by this DNN architecture as $U_{NN}$, which we assume to be a subset of the trial space $U$:
\begin{equation}\label{eq:UNN}
U_{NN} := \{u_{\theta} \, : \, \theta \in \mathbb{R}^s \} \subset U.
\end{equation}

\subsection{Loss function}\label{Sec:RVPINNs}

Let $V_k = \mathrm{span}\{\varphi_n\}_{n=1}^{N_k}$ be a finite-dimensional subspace of $V$.
For a given neural network $u_\theta$, the discrete Riesz representative of the residual $r(u_{\theta},\cdot)$ with respect to $V_k$ is the function $\phi_k(u_{\theta}) \in V_k$ satisfying 
\begin{equation}\label{eq:discreteRiesz}
(\phi_k(u_{\theta}),  v_k)_{V} = r(u_{\theta}, v_k), \quad \forall \, v_k\in V_k.
\end{equation}
It follows immediately that the discrete dual norm of the residual is equal to the $V$-norm of its discrete Riesz representative. Indeed,
\begin{align}\label{Eq: discrete dual norm of residual}
    \|r(u_\theta,\cdot)\|_{V_k'} = \sup_{0\neq v_k \in V_k}\dfrac{r(u_{\theta},v_k)}{\|v_k\|_V} = \sup_{0\neq v_k \in V_k}\dfrac{(\phi_k(u_{\theta}),v_k)_V}{\|v_k\|_V} = \|\phi_k(u_{\theta})\|_{V}.
\end{align}
Moreover, a straightforward computation shows that the norm of the discrete Riesz representative provides a lower bound for the approximation error. That is, one has
\begin{equation}
\dfrac{1}{\mu} \|\phi_k(u_{\theta})\|_V \leq \|u-u_{\theta}\|_U.
\end{equation}

The RVPINN loss function is formulated using the norm of the discrete Riesz representative \eqref{Eq: discrete dual norm of residual} of the residual corresponding to $u_\theta$. 
Following \cite{rojas2024robust}, the loss is defined as:
\begin{equation}\label{eq:loss_RVPINNs}
\mathcal{L}_k\left(u_{\theta} \right) := \|\phi_k(u_{\theta})\|^2_{V} + C(u_{\theta}),
\end{equation}
where $C(\cdot)$ imposes constraints on the neural network (e.g., data interpolation). 
As a consequence of the linearity of the residual $r(u_\theta,\cdot)$, and of the inner product $(\cdot,\cdot)_V$, the discrete Riesz representative $\phi_k(u_\theta)$ can be written as
\begin{equation}\label{eq:linear_comb}
\phi_k(u_{\theta}) := \sum_{n=1}^{N_k} \eta_\theta(n) \varphi_n, 
\end{equation}
with coefficients $\eta_\theta(n) \in \mathbb{R}$. Substituting \eqref{eq:linear_comb} into \eqref{eq:discreteRiesz} yields the linear system
\begin{equation}\label{eq:Riesz_mat}
G_k \bm{\eta}_\theta = \bm{r}_{\theta,k},
\end{equation}
where $\bm{\eta}_\theta$ is the vector of coefficients $\eta_\theta(n)$, $G_k$ is the ($\theta$-independent) symmetric and positive definite Gram matrix $G_k(n,m) = (\varphi_m,\varphi_n)_{V}$, and $\bm{r}_{\theta,k}$ is the vector with entries $\bm{r}_{\theta,k}(n) = r(u_{\theta}, \varphi_n)$, $n=1,\dots,N_k$.
From \eqref{eq:linear_comb} and the definitions of $G_k$ and $\bm{r}_{\theta,k}$, the norm of the discrete Riesz representative satisfies
\begin{equation}
\|\phi_k(u_{\theta})\|^2_{V} = (\phi_k(u_{\theta}) , \phi_k(u_{\theta}))_V = \bm{r}_{\theta,k}^T G_k^{-1} \bm{r}_{\theta,k}.
\end{equation}
Hence, the loss functional \eqref{eq:loss_RVPINNs} can be equivalently written as
\begin{equation}\label{eq:AVPINNs_loss_mat}
\mathcal{L}_k(u_{\theta}) = \bm{r}_{\theta,k}^T G_k^{-1} \bm{r}_{\theta,k} + C(u_{\theta}).
\end{equation}
For simplicity, throughout this article, we assume that the loss function $\mathcal{L}_k(u_\theta)$ has $C(u_\theta) = 0$. 

\begin{remark}
    The inversion of $G_k$ can be computationally expensive when using a standard piecewise polynomial test space, as typically found in FEM. In this scenario, maintaining efficiency may require the use of preconditioners, as discussed in \cite{fuhrer2025posteriori}. However, a remarkable property of the RVPINN framework is that the inversion of $G_k$ becomes cost-free or trivial when choosing a localizable test space, such as an orthonormal basis or discontinuous test functions.
\end{remark}

\begin{remark}
    To apply the RVPINN framework to the composite residual formulation introduced in Section \ref{sec: composite residual}, we let $V_k^\Omega := \mathrm{span}\{\varphi_n^\Omega\}_{n=1}^{N_k}$ and $V_k^\Gamma := \mathrm{span}\{\varphi_m^\Gamma\}_{m=1}^{M_k}$ be discrete subspaces of $V_\Omega$ and $V_\Gamma$, respectively. By employing the product norm, the corresponding loss function is defined as:
\begin{equation}\label{Eq: loss combined}
\mathcal{L}_k\left(u_{\theta} \right) := \|\phi_{k}^\Omega(u_{\theta})\|^2_{V_\Omega} + \|\phi_{k}^\Gamma(u_{\theta})\|^2_{V_\Gamma},
\end{equation}
where $\phi_{k}^\Omega(u_{\theta})$ and $\phi_{k}^\Gamma(u_{\theta})$ are discrete Riesz representatives of the residuals $r_\Omega(u_{\theta},\cdot)$ and $r_\Gamma(u_{\theta},\cdot)$, respectively (see \eqref{eq:discreteRiesz}).
This formulation is computationally equivalent to:
\begin{equation}\label{Eq: loss two terms}
\mathcal{L}_k(u_{\theta}) = (\bm{r}_{\theta,k}^\Omega)^T (G_{k}^\Omega)^{-1} \bm{r}_{\theta,k}^\Omega + (\bm{r}_{\theta,k}^\Gamma)^T (G_{k}^\Gamma)^{-1} \bm{r}_{\theta,k}^\Gamma,
\end{equation}
where $\bm{r}_{\theta,k}^\Omega$ and $\bm{r}_{\theta,k}^\Gamma$ are vectors with entries $\bm{r}_{\theta,k}^\Omega(n) = r_\Omega(u_{\theta}, \varphi_n^\Omega)$, and $\bm{r}_{\theta,k}^\Gamma(m) = r_\Gamma(u_{\theta}, \varphi_m^\Gamma)$. Here, $G_k^{\Omega}$ and $G_k^{\Gamma}$ denote the corresponding Gram matrices for the domain and boundary test spaces, respectively.
\end{remark}

\section{Error upper bounds}\label{sec: Error estimates}

For a given neural network approximation $u_\theta$, we let $\phi(u_{\theta})\in V $ denote the (continuous) Riesz representative (see \eqref{eq:Riesz}) associated with $u_{\theta}$ satisfying
    \begin{equation}\label{optimal discrete Riesz}
        (\phi(u_{\theta}), v)_V = r(u_{\theta}, v) \quad \forall v \in V.
    \end{equation}
In addition, we recall the discrete Riesz representative $\phi_k(u_\theta)$ with respect to $V_k \subset V$ (see \eqref{eq:discreteRiesz}). In this section, we discuss two types of error upper bounds: the first is based on a residual-remainder term, while the second relies on the discretization error of the Riesz representative.

\subsection{Two error upper bounds}

As shown in \cite{fuhrer2025posteriori}, suppose there exists an operator $\Pi_k:V\to V_k$ and a constant $C_{\Pi_k}>0$ such that $\|\Pi_kv\|_V \leq C_{\Pi_k}\|v\|_V$ for all $v \in V$. Then, $$ \|\phi(u_\theta)\|_V \leq C_{\Pi_k}\|\phi_k(u_{\theta})\|_V + \mathcal{R}_k(u-u_{\theta}),$$
where 
$\mathcal{R}_k(u-u_{\theta}):=\sup\limits_{0\neq v \in V}\dfrac{b(u-u_{\theta},v-\Pi_k v)}{\|v\|_V}$ is a residual remainder associated with the discrete space $V_k$. Applying \eqref{eq:robust estimation}, we obtain the following upper bound:
\begin{equation}\label{Eq: error bound osc}
    \|u-u_\theta\|_U \leq \frac{1}{\alpha}\left(C_{\Pi_k}\|\phi_k(u_{\theta})\|_V + \mathcal{R}_k(u-u_{\theta})\right).
\end{equation}

Alternatively, by using Inequality \eqref{eq:robust estimation} and the triangle inequality, we derive an upper bound for the true error:
\begin{equation}\label{Eq: true error upper bound}
    \|u-u_\theta\|_U \leq \frac{1}{\alpha}\|\phi(u_\theta)\|_V \leq \dfrac{1}{\alpha}\left(\|\phi_k(u_\theta)\|_V + \|\phi(u_\theta)-\phi_k(u_\theta)\|_V\right).
\end{equation}

Inequality \eqref{Eq: true error upper bound} highlights that bounding the error between the trained neural network $u_\theta$ and the exact solution $u$ depends on two ingredients: 
    \begin{enumerate}
        \item Optimization convergence: The training process must successfully minimize the discrete dual norm, ensuring that $\|\phi_k(u_{\theta})\|_V$ is small. This reflects the quality of the nonlinear solver (e.g., Adam \cite{kingma2014adam}) in finding a near-optimal discrete minimizer.
        \item Accuracy of the discrete test space: The discrete test space $V_k$ must be rich enough to ensure that the discrete Riesz representative $\phi_k(u_{\theta})$ is a faithful approximation of the continuous one, $\phi(u_{\theta})$. 
    \end{enumerate}

\subsection{Equivalence of two error upper bounds}

In this section, we establish the equivalence between the error upper bound based on the remainder term, $\|\phi_k(u_{\theta})\|_V + \mathcal{R}_k(u-u_{\theta})$, and the upper bound based on the Riesz representative error, $\|\phi_k(u_\theta)\|_V + \|\phi(u_\theta)-\phi_k(u_\theta)\|_V$.

\begin{lemma}\label{lem: Riesz disc identity}
    Let $u$ be the exact solution to \eqref{eq:weakPDE} and $u_\theta$ be a neural network approximation. Let $\phi(u_\theta)$ and $\phi_k(u_\theta)$ be the continuous and discrete Riesz representatives of the residual, as defined in \eqref{optimal discrete Riesz} and \eqref{eq:discreteRiesz}, respectively. Given an operator $\Pi_k:V\to V_k$ such that $\|\Pi_kv\|_V \leq C_{\Pi_k}\|v\|_V$ for all $v \in V$, the following identity holds:
    \begin{equation}
        (\phi(u_\theta)-\phi_k(u_\theta),v)_V = b(u-u_\theta,v-\Pi_kv)-(\phi_k(u_\theta),v-\Pi_kv)_V.
    \end{equation}
    \begin{proof}
        By decomposing the inner product, and using the definition of Riesz representatives $\phi(u_\theta)$ and $\phi_k(u_\theta)$, we obtain
        \begin{align*}
            (\phi(u_\theta)-\phi_k(u_\theta),v)_V &= (\phi(u_\theta),v)_V - (\phi_k(u_\theta),v-\Pi_kv)_V - (\phi_k(u_\theta),\Pi_kv)_V \\
            &= r(u_\theta, v) - (\phi_k(u_\theta),v-\Pi_kv)_V - r(u_\theta,\Pi_kv).
        \end{align*}
        Applying the definition of the residual $r(u_\theta,\cdot)$ and the bilinearity of $b(\cdot,\cdot)$, we have
        \begin{align*}
            (\phi(u_\theta)-\phi_k(u_\theta),v)_V 
            &= b(u-u_\theta, v) - (\phi_k(u_\theta),v-\Pi_kv)_V - b(u-u_\theta,\Pi_kv) \\
            &= b(u-u_\theta, v-\Pi_kv) - (\phi_k(u_\theta),v-\Pi_kv)_V,
        \end{align*}
        which completes the proof.
    \end{proof}
\end{lemma}

\begin{proposition}[Equivalence of two upper bounds]
    Let $u$ be the exact solution to \eqref{eq:weakPDE} and $u_\theta$ be a neural network approximation. Let $\phi(u_\theta)$ and $\phi_k(u_\theta)$ be the continuous and discrete Riesz representatives of the residual, as defined in \eqref{optimal discrete Riesz} and \eqref{eq:discreteRiesz}, respectively. For a given operator $\Pi_k:V\to V_k$ such that $\|\Pi_kv\|_V \leq C_{\Pi_k}\|v\|_V$ for all $v \in V$, let $D_{\Pi_k}:=\|I-\Pi_k\|$. Then, the following inequalities hold:
    \begin{equation}\label{Eq: equiv1}
        \|\phi(u_\theta)-\phi_k(u_\theta)\|_V + \|\phi_k(u_\theta)\|_V \leq \mathcal{R}_k(u-u_\theta) + (1+D_{\Pi_k})\|\phi_k(u_\theta)\|_V,
    \end{equation}
    and
    \begin{equation}\label{Eq: equiv2}
        \mathcal{R}_k(u-u_\theta) + \|\phi_k(u_\theta)\|_V \leq \|\phi(u_\theta)-\phi_k(u_\theta)\|_V+(1+D_{\Pi_k})\|\phi_k(u_\theta)\|_V.
    \end{equation}
\end{proposition}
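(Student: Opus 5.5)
Both inequalities follow from the identity in Lemma~\ref{lem: Riesz disc identity}, read in two directions, together with the duality characterisations of the $V$-norm. Throughout, $D_{\Pi_k}=\|I-\Pi_k\|$ is the operator norm, so $\|v-\Pi_kv\|_V\le D_{\Pi_k}\|v\|_V$ for all $v\in V$.

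\textbf{Proof of \eqref{Eq: equiv1}.} Fix $0\neq v\in V$ and apply Lemma~\ref{lem: Riesz disc identity}.
\begin{itemize}
\item Bound the first term by the definition of the remainder: $b(u-u_\theta,v-\Pi_kv)\le \mathcal{R}_k(u-u_\theta)\|v\|_V$.
\item Bound the second term by Cauchy--Schwarz and the operator norm: $|(\phi_k(u_\theta),v-\Pi_kv)_V|\le D_{\Pi_k}\|\phi_k(u_\theta)\|_V\|v\|_V$.
\end{itemize}
This gives
\begin{equation*}
(\phi(u_\theta)-\phi_k(u_\theta),v)_V\le \bigl(\mathcal{R}_k(u-u_\theta)+D_{\Pi_k}\|\phi_k(u_\theta)\|_V\bigr)\|v\|_V .
\end{equation*}
Since $\|w\|_V=\sup_{v\neq 0}(w,v)_V/\|v\|_V$, taking the supremum over $v$ yields
\begin{equation*}
\|\phi(u_\theta)-\phi_k(u_\theta)\|_V\le \mathcal{R}_k(u-u_\theta)+D_{\Pi_k}\|\phi_k(u_\theta)\|_V .
\end{equation*}
Adding $\|\phi_k(u_\theta)\|_V$ to both sides gives \eqref{Eq: equiv1}.

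\textbf{Proof of \eqref{Eq: equiv2}.} Rearrange the identity of Lemma~\ref{lem: Riesz disc identity} as
\begin{equation*}
b(u-u_\theta,v-\Pi_kv)=(\phi(u_\theta)-\phi_k(u_\theta),v)_V+(\phi_k(u_\theta),v-\Pi_kv)_V .
\end{equation*}
\begin{itemize}
\item The first term is at most $\|\phi(u_\theta)-\phi_k(u_\theta)\|_V\|v\|_V$ by Cauchy--Schwarz.
\item The second term is at most $D_{\Pi_k}\|\phi_k(u_\theta)\|_V\|v\|_V$, as before.
\end{itemize}
Divide by $\|v\|_V$ and take the supremum over $v\neq 0$. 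By the definition of $\mathcal{R}_k$ this gives
\begin{equation*}
\mathcal{R}_k(u-u_\theta)\le \|\phi(u_\theta)-\phi_k(u_\theta)\|_V+D_{\Pi_k}\|\phi_k(u_\theta)\|_V .
\end{equation*}
Adding $\|\phi_k(u_\theta)\|_V$ to both sides yields \eqref{Eq: equiv2}.

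The argument is elementary. The only point needing care is that $D_{\Pi_k}$ is finite: since $\Pi_k$ is bounded with constant $C_{\Pi_k}$, we have $D_{\Pi_k}\le 1+C_{\Pi_k}$. No use of the bound $\|\Pi_kv\|_V\le C_{\Pi_k}\|v\|_V$ beyond this finiteness is needed.
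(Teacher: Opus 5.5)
Your proposal is correct and follows the paper's proof essentially step for step: both read the identity of Lemma~\ref{lem: Riesz disc identity} in two directions, bound the $\phi_k(u_\theta)$-term by Cauchy--Schwarz and $D_{\Pi_k}$, take the supremum over $v$, and add $\|\phi_k(u_\theta)\|_V$ to both sides. Bounding the absolute value $|(\phi_k(u_\theta),v-\Pi_kv)_V|$ also handles the minus sign in the identity slightly more cleanly than the paper's sum-of-suprema step.
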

\begin{proof}
    By the Cauchy-Schwarz inequality and the property of the operator norm for $I-\Pi_k$, we have
    \begin{align}\label{Eq: I-Pi_k}
        (\phi_k(u_\theta),v-\Pi_kv)_V \leq \|\phi_k(u_\theta)\|_V\|v-\Pi_kv\|_V 
        \leq D_{\Pi_k}\|\phi_k(u_\theta)\|_V\|v\|_V.
    \end{align}
    Applying Lemma \ref{lem: Riesz disc identity}, Equation \eqref{Eq: I-Pi_k}, and the definition of $\mathcal{R}_k(u-u_\theta)$, we obtain
    \begin{align*}
        \|\phi(u_\theta)-\phi_k(u_\theta)\|_V &= 
        \sup_{0\neq v \in V}\dfrac{(\phi(u_\theta)-\phi_k(u_\theta),v)_V}{\|v\|_V} \\
        &\leq \sup_{0\neq v \in V}\dfrac{b(u-u_\theta,v-\Pi_kv)}{\|v\|_V} + \sup_{0\neq v \in V}\dfrac{(\phi_k(u_\theta),v-\Pi_kv)_V}{\|v\|_V}\\
        &\leq \mathcal{R}_k(u-u_\theta) + D_{\Pi_k}\|\phi_k(u_\theta)\|_V.
    \end{align*}
    Adding $\|\phi_k(u_\theta)\|_V$ to both sides yields \eqref{Eq: equiv1}.
    
    Conversely, starting from the definition of $\mathcal{R}_k(u-u_\theta)$ and applying Lemma \ref{lem: Riesz disc identity} and Equation \eqref{Eq: I-Pi_k}, we have
    \begin{align*}
        \mathcal{R}_k(u-u_\theta) &= \sup_{0\neq v \in V}\dfrac{b(u-u_\theta,v-\Pi_kv)}{\|v\|_V} \\
        &\leq \sup_{0\neq v \in V} \dfrac{(\phi(u_\theta)-\phi_k(u_\theta),v)_V}{\|v\|_V} + \sup_{0\neq v \in V} \dfrac{(\phi_k(u_\theta),v-\Pi_k v)_V}{\|v\|_V} \\
        &\leq \|\phi(u_\theta)-\phi_k(u_\theta)\|_V+D_{\Pi_k}\|\phi_k(u_\theta)\|_V.
    \end{align*}
    Adding $\|\phi_k(u_\theta)\|_V$ to both sides yields \eqref{Eq: equiv2}.
\end{proof}

Although the $V$-norm of the (continuous) Riesz representative provides a robust estimate of the true error in the $U$-norm (see \eqref{eq:robust estimation}), replacing $\phi(u_{\theta})$ with the discrete Riesz representative $\phi_k(u_{\theta})$ does not, in general, yield an analogous result.
The error estimates in \eqref{Eq: error bound osc} and \eqref{Eq: true error upper bound} show that minimizing $\|\phi_k(u_{\theta})\|_V$ does not necessarily minimize the true error when the remainder term dominates. 
If the underlying discrete test space $V_k$ is too coarse, a reduction in the loss functional $\mathcal{L}_k(u_{\theta})$ does not necessarily guarantee a decrease in the true error. In practice, this issue can be observed after some training iterations: the loss decreases while the true approximation error stagnates (see \cite[Fig. 3]{rojas2024robust}).
Conversely, starting with an overly refined test-space mesh is computationally costly in terms of memory consumption, as it increases both the number of quadrature evaluations and the associated storage requirements per training iteration. While a fine initial mesh avoids the need to frequently redefine the loss function structure, it wastes resources on regions where the error is already negligible.

To address these issues, the neural network should be trained initially using a coarse test-space mesh, which is computationally inexpensive, and the mesh should then be refined adaptively whenever the loss becomes sufficiently small. 
The mesh refinement ensures that the remainder term decreases as the test space is enriched, allowing continued training of the neural network under the updated loss function.

\section{Theoretical adaptivity strategies}\label{sec: ideal adap}

In this section, we analyze theoretical adaptivity strategies designed to guarantee the convergence of neural network approximations within the RVPINN framework. We first discuss an idealized setting in which the approximation error can be bounded by a prescribed tolerance. We then consider a sequential adaptivity strategy based on the assumption that the trained neural network does not increase the Riesz discrepancy.

\subsection{Idealized adaptivity strategy}

Inequality \eqref{Eq: true error upper bound} shows that the true error can be upper bounded by two quantities: the discretization error of the test space and the minimized loss over that space. Therefore, convergence to the solution is ensured if the Riesz discrepancy 
\begin{align*}
    \eta_k(u_\theta):=\|\phi(u_\theta)-\phi_k(u_\theta)\|_V
\end{align*} 
is kept under control via test-space refinement, while the square root of the discrete loss $\sqrt{\mathcal{L}_k(u_\theta)}=\|\phi_k(u_\theta)\|_V$ is minimized through training. 

Given an initial mesh $\mathcal{T}_0$, $\epsilon_0 >0$, and $0<\delta<1$. Let $u_{\theta^0}$ be an initial neural network. We initialize $\mathcal{T}_1^0:=\mathcal{T}_{0}$ and $u_{\theta^{1}}^0:=u_{\theta^{0}}$. An idealized algorithm sets $k:=1$ and iterates:
\begin{align*}
    &\epsilon_k=\delta\epsilon_{k-1}\\
    &\textrm{For } i= ~1,2,\ldots,N\\
    & \quad \mathcal{T}_{k}^i = ~\mathrm{ADAPT}(\mathcal{T}_{k}^0,u_{\theta^{k}}^{i-1},\epsilon_{k}) \\
    & \quad u_{\theta^{k}}^{i} = ~\mathrm{LEARN}(\mathcal{T}_{k}^i,u_{\theta^{k}}^0,\epsilon_{k})\\
    &\textrm{End for}\\
    &\mathcal{T}_{k+1}^0:=\mathcal{T}_{k}^N, u_{\theta^{k+1}}^0:=u_{\theta^{k}}^N\\
    &k\leftarrow k+1.
\end{align*}

Here, the ADAPT process uses the trained neural network $u_{\theta^k}^{i-1}$ as prior knowledge to refine the initial mesh $\mathcal{T}_k^0$ and obtain $\mathcal{T}_k^i$ such that 
\begin{equation}\label{eq: ADAPT condition}
    \|\phi(u_{\theta^k}^{i-1})-\phi_{\mathcal{T}_{k}^i}(u_{\theta^k}^{i-1})\|_V \leq \epsilon_k,
\end{equation} 
where $\phi_{\mathcal{T}_k^i}(u_{\theta^k}^{i-1})$ is the discrete Riesz representative of $r(u_{\theta^k}^{i-1},\cdot)$ with respect to $\mathcal{T}_k^i$.
The LEARN process uses the refined mesh $\mathcal{T}_{k}^i$ as an input and trains the neural network $u_{\theta^k}^0$ over this mesh to obtain $u_{\theta^k}^i$ such that 
\begin{equation}\label{eq: LEARN condition}
    \|\phi_{\mathcal{T}_{k}^i}(u_{\theta^k}^{i})\|_V \leq \epsilon_k.
\end{equation}

Under the idealized adaptive algorithm that controls the Riesz discrepancy, and under sufficient training on each refined test space, the following
error bound holds.
\begin{proposition}[Error bound for idealized strategy]
    Let $u$ be the solution to \eqref{eq:weakPDE} and let $N=\infty$. Assuming that $u_{\theta^k}^i \to u_{\theta^k}$ as $i\to\infty$, we obtain:
    \begin{equation*}
        \|u-u_{\theta^k}\|_U
        \leq \dfrac{2}{\alpha}\delta^k\epsilon_0,
    \end{equation*}
    where $\alpha$ is the constant in Equation \eqref{eq:inf_sup_cont}.
\end{proposition}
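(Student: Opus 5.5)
The plan is to apply the triangle-inequality bound \eqref{Eq: true error upper bound} at a fixed outer index $k$, with the discrete space taken as the one generated by $\mathcal{T}_k^i$. This bound is then combined with the ADAPT condition \eqref{eq: ADAPT condition} and the LEARN condition \eqref{eq: LEARN condition}, followed by a limit $i\to\infty$.

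First we fix $k$ and $i$. The bound \eqref{Eq: true error upper bound} holds with the same network in both terms. The ADAPT condition, however, controls the discrepancy at $u_{\theta^k}^{i-1}$ on the mesh $\mathcal{T}_k^i$. The LEARN condition controls the discrete Riesz norm at $u_{\theta^k}^{i}$ on that same mesh. So there is an index mismatch.

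To handle it, I would write \eqref{Eq: true error upper bound} for $u_{\theta^k}^{i-1}$ on the mesh $\mathcal{T}_k^{i}$, and then pass to the limit. Let $\phi_{\mathcal{T}}$ denote the discrete Riesz map for a mesh $\mathcal{T}$. We have
\begin{equation*}
\|u-u_{\theta^k}^{i-1}\|_U\le \tfrac{1}{\alpha}\bigl(\|\phi_{\mathcal{T}_k^i}(u_{\theta^k}^{i-1})\|_V+\epsilon_k\bigr),
\end{equation*}
and
\begin{equation*}
\|\phi_{\mathcal{T}_k^i}(u_{\theta^k}^{i-1})\|_V\le \|\phi_{\mathcal{T}_k^i}(u_{\theta^k}^{i})\|_V+\|\phi_{\mathcal{T}_k^i}(u_{\theta^k}^{i-1})-\phi_{\mathcal{T}_k^i}(u_{\theta^k}^{i})\|_V .
\end{equation*}
The first term on the right is at most $\epsilon_k$ by LEARN. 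The second term is the norm of the discrete Riesz representative of $b(u_{\theta^k}^{i}-u_{\theta^k}^{i-1},\cdot)$ restricted to $V_k$. By \eqref{Eq: discrete dual norm of residual} and \eqref{eq:bound_cont}, it is bounded by $\mu\|u_{\theta^k}^{i}-u_{\theta^k}^{i-1}\|_U$, uniformly in the mesh. This uniformity matters, because the mesh changes with $i$. Since $u_{\theta^k}^i\to u_{\theta^k}$ in $U$, the sequence is Cauchy and this term tends to $0$.

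Taking $i\to\infty$, and using the continuity of $w\mapsto\|u-w\|_U$, gives $\|u-u_{\theta^k}\|_U\le \frac{2}{\alpha}\epsilon_k$. The proof then concludes with the recursion $\epsilon_k=\delta\epsilon_{k-1}$, which gives $\epsilon_k=\delta^k\epsilon_0$ and hence $\frac{2}{\alpha}\delta^k\epsilon_0$.

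The main obstacle is the index mismatch between ADAPT (evaluated at $i-1$) and LEARN (evaluated at $i$) on the moving meshes $\mathcal{T}_k^i$. It is resolved only through the mesh-uniform Lipschitz bound $\mu$ on the discrete Riesz map together with the assumed convergence of the trained networks, interpreted as convergence in $U$.

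An alternative is to bound the continuous quantity directly. Using \eqref{eq:robust estimation} and the Riesz identity, $\|\phi(u_{\theta^k}^{i})-\phi(u_{\theta^k}^{i-1})\|_V\le\mu\|u_{\theta^k}^{i}-u_{\theta^k}^{i-1}\|_U$. We then apply the triangle inequality to $\|\phi(u_{\theta^k}^{i})\|_V$ through $\phi_{\mathcal{T}_k^i}(u_{\theta^k}^{i})$ and ADAPT. This gives the same limit bound, and I would present whichever is cleaner.
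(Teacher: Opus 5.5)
Your proof is correct. Its skeleton is the paper's: the bound \eqref{Eq: true error upper bound}, the ADAPT and LEARN conditions, a limit in $i$, and $\epsilon_k=\delta^k\epsilon_0$. The difference lies in where the limit is taken.

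The paper applies \eqref{Eq: true error upper bound} directly to the limit network $u_{\theta^k}$ on a mesh $\mathcal{T}_k^N$ with $N=\infty$. It then passes to the limit inside \eqref{eq: ADAPT condition} and \eqref{eq: LEARN condition}. That tacitly presupposes a well-defined limiting mesh, and continuity of the discrete Riesz maps along the changing meshes $\mathcal{T}_k^i$. Neither is stated or justified, and nothing forces the meshes to stabilize.

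You avoid this by working at finite $i$ with $u_{\theta^k}^{i-1}$ on $\mathcal{T}_k^i$. You bridge the $i-1$ versus $i$ mismatch with the mesh-independent estimate $\|\phi_{\mathcal{T}}(w_1)-\phi_{\mathcal{T}}(w_2)\|_V\le\mu\|w_1-w_2\|_U$. The only limit you then need is of the mesh-free quantity $\|u-u_{\theta^k}^{i-1}\|_U$. The paper's version is shorter, but yours is rigorous as written and makes explicit what is actually needed: convergence in $U$ and the uniform constant $\mu$.

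Your alternative route also works, but as sketched it skips a step. To bound $\|\phi(u_{\theta^k}^{i})-\phi_{\mathcal{T}_k^i}(u_{\theta^k}^{i})\|_V$ via ADAPT, you must also control how $\phi-\phi_{\mathcal{T}_k^i}$ changes between $u_{\theta^k}^{i-1}$ and $u_{\theta^k}^{i}$. One way is to use that $\phi_{\mathcal{T}}$ is the $V$-orthogonal projection of $\phi$ onto the discrete space. Your main route is therefore the cleaner one to present.
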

\begin{proof}
    Applying Equation (\ref{Eq: true error upper bound}), we obtain:
    \begin{equation}\label{eq: idealized upper}
        \|u-u_{\theta^k}\|_U \leq \dfrac{1}{\alpha}\left(\|\phi_{\mathcal{T}_k^N}(u_{\theta^k})\|_V + \|\phi(u_{\theta^k})-\phi_{\mathcal{T}_k^N}(u_{\theta^k})\|_V\right).
    \end{equation}
    By the assumption, both $u_{\theta^k}^{i-1}$ and $u_{\theta^k}^i$ converge to $u_{\theta^k}$ as $i\to \infty$. Taking the limit in Equations
    (\ref{eq: ADAPT condition}) and (\ref{eq: LEARN condition}), and substituting the results into Equation (\ref{eq: idealized upper}), we obtain
    \begin{equation*}
        \|u-u_{\theta^k}\|_U \leq \dfrac{2}{\alpha}\epsilon_k.
    \end{equation*}
    Finally, using the relation $\epsilon_k=\delta\epsilon_{k-1}$ recursively yields $\epsilon_k=\delta^k\epsilon_0$, which completes the proof.
\end{proof}

Although this proposition guarantees convergence to the solution when $N=\infty$, the development of a practical algorithm to ensure such convergence requires further study. Alternatively, we propose a sequentially adaptive strategy with $N=1$, assuming that the Riesz discrepancy decreases after training, a behavior observed empirically in later sections. The following section details this adaptive algorithm and its corresponding theoretical results.

\subsection{Sequential adaptivity strategy}\label{sec: post-training adap}

Given an initial mesh $\mathcal{T}_0$, parameters $\epsilon_0>0$ and $0<\delta<1$, and an initial neural network $u_{\theta^0}$ satisfying $\sqrt{\mathcal{L}_{0}(u_{\theta^{0}})}\leq\epsilon_{0}$, a sequential adaptivity algorithm sets $k:=1$ and iterates:
\begin{align*}
    \epsilon_{k}&=\delta\epsilon_{k-1}\\
    \mathcal{T}_{k} &= \mathrm{ADAPT}(\mathcal{T}_{k-1},u_{\theta^{k-1}},\epsilon_{k}) \\
    u_{\theta^{k}} &= \mathrm{LEARN}(\mathcal{T}_{k},u_{\theta^{k-1}},\epsilon_{k}) \\
    k &\leftarrow k+1.
\end{align*}

Procedure descriptions

\begin{itemize}
    \item ADAPT: We refine the current mesh $\mathcal{T}_{k-1}$ to produce the refined mesh $\mathcal{T}_{k}$ such that the Riesz discrepancy satisfies $\eta_{k}(u_{\theta^{k-1}}) \leq \epsilon_{k}$.
    \item LEARN: We train the neural network $u_{\theta^{k-1}}$ on the mesh $\mathcal{T}_{k}$ by minimizing the loss function $\mathcal{L}_{k}$ to obtain the neural network $u_{\theta^{k}}$ such that $\sqrt{\mathcal{L}_{k}(u_{\theta^{k}})}\leq\epsilon_{k}$.
\end{itemize}

The following proposition establishes a theoretical optimality of the sequential algorithm, assuming that the neural network $u_{\theta^k}$ obtained after training does not increase the Riesz discrepancy compared to the pre-trained state $u_{\theta^{k-1}}$, i.e., $\eta_{k}(u_{\theta^{k}}) \leq \eta_{k}(u_{\theta^{k-1}})$. We note that this assumption is not an inherent property and may need to be monitored during training via a computable indicator.

\begin{proposition}[Error bound for sequential strategy]
    Let $u$ be the solution to \eqref{eq:weakPDE} and $u_{\theta^{k}} \in U_{NN}$ be the neural network obtained at the $k$th iteration of the sequential algorithm. If $\eta_{k}(u_{\theta^{k}}) \leq \eta_{k}(u_{\theta^{k-1}})$, then the following bound holds:
    \begin{equation*}
        \|u-u_{\theta^{k}}\|_U \leq \dfrac{2}{\alpha}\delta^k\epsilon_{0},
    \end{equation*}
    where $\alpha$ is the constant in Equation \eqref{eq:inf_sup_cont}.
\end{proposition}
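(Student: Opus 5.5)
The argument mirrors the one for the idealized strategy, except that the limiting step is replaced by the monotonicity hypothesis on the Riesz discrepancy. We start from the upper bound \eqref{Eq: true error upper bound}, written for the network $u_{\theta^k}$ and the discrete space associated with the mesh $\mathcal{T}_k$:
\begin{equation*}
    \|u-u_{\theta^k}\|_U \leq \dfrac{1}{\alpha}\left(\|\phi_{k}(u_{\theta^k})\|_V + \eta_k(u_{\theta^k})\right).
\end{equation*}
Here $\phi_k$ is the discrete Riesz representative with respect to $\mathcal{T}_k$, and $\eta_k(u_{\theta^k})=\|\phi(u_{\theta^k})-\phi_k(u_{\theta^k})\|_V$. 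It then suffices to bound each of the two terms on the right by $\epsilon_k$.

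For the first term we use the LEARN step. It produces $u_{\theta^k}$ with $\sqrt{\mathcal{L}_k(u_{\theta^k})}\le\epsilon_k$. Since $C(u_\theta)=0$, we have $\sqrt{\mathcal{L}_k(u_{\theta^k})}=\|\phi_k(u_{\theta^k})\|_V$, and hence $\|\phi_k(u_{\theta^k})\|_V\le\epsilon_k$.

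For the second term we combine the hypothesis with the ADAPT step. The mesh $\mathcal{T}_k$ is constructed so that $\eta_k(u_{\theta^{k-1}})\le\epsilon_k$, and the hypothesis gives $\eta_k(u_{\theta^k})\le\eta_k(u_{\theta^{k-1}})$. Together these yield $\eta_k(u_{\theta^k})\le\epsilon_k$. This is the only step where something beyond the algorithm's definition enters. The subtlety is that ADAPT controls the discrepancy for the \emph{pre-training} network $u_{\theta^{k-1}}$, whereas the bound requires it for the \emph{post-training} network $u_{\theta^k}$. The assumed monotonicity bridges exactly this gap, and without it the argument would fail.

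Substituting both bounds gives $\|u-u_{\theta^k}\|_U\le \frac{2}{\alpha}\epsilon_k$. Unrolling the recursion $\epsilon_k=\delta\epsilon_{k-1}$ gives $\epsilon_k=\delta^k\epsilon_0$, which is the claimed estimate. The initialization $\sqrt{\mathcal{L}_0(u_{\theta^0})}\le\epsilon_0$ is not needed for the bound at step $k\ge1$; it only ensures the algorithm starts consistently.
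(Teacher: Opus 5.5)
Your proof is correct and is the same as the paper's argument. Both apply \eqref{Eq: true error upper bound} to $u_{\theta^k}$, bound $\|\phi_k(u_{\theta^k})\|_V\le\epsilon_k$ via LEARN, bound $\eta_k(u_{\theta^k})\le\eta_k(u_{\theta^{k-1}})\le\epsilon_k$ via the monotonicity hypothesis combined with ADAPT, and finish with $\epsilon_k=\delta^k\epsilon_0$.
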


\begin{proof}
    By the assumption of this proposition and the ADAPT condition, we obtain
    \begin{equation*}
        \|\phi(u_{\theta^{k}})-\phi_k(u_{\theta^{k}})\|_V=\eta_{k}(u_{\theta^{k}}) \leq \eta_k(u_{\theta^{k-1}}) \leq \epsilon_k.
    \end{equation*}
    Furthermore, the LEARN condition ensures that
    \begin{equation*}
        \|\phi_k(u_{\theta^{k}})\|_V \leq \epsilon_k.
    \end{equation*}
    Applying these two inequalities to the error bound in Equation \eqref{Eq: true error upper bound} with $u_\theta = u_{\theta^{k}}$ yields
    \begin{equation*}
    \|u - u_{\theta^k}\|_U \leq \frac{1}{\alpha} (\epsilon_k + \epsilon_k) = \frac{2}{\alpha} \delta^k\epsilon_0,
    \end{equation*}
    which completes the proof.
\end{proof}

In practice, training a neural network using a sufficiently fine mesh drives the approximation toward a minimizer of the variational residual rather than the exact solution $u$. 
Therefore, we analyze the error of the trained neural network by considering the best-in-class approximation, which minimizes the continuous residual norm.
Since the set of neural networks $U_{NN}$ is neither a linear, closed, nor convex subset of $U$, a unique minimizer of $\|\phi(u_\theta)\|_V$ may not exist within $U_{NN}$ \cite{petersen2021topological}. To account for this, we evaluate the approximation error by considering a quasi-minimizer (cf. \cite{brevis2022neural}).
\begin{definition}
    Let $\delta_{NN}>0$ be a given tolerance. A neural network $u_{\tilde\theta}\in U_{NN}$ is called a quasi-minimizer if it satisfies:
    \begin{align}\label{minimizer of phi}
        \|\phi(u_{\tilde\theta})\|_V \leq  \inf_{\theta \in \mathbb{R}^s} \|\phi(u_{\theta})\|_V + \delta_{NN}.
    \end{align}
\end{definition}

\begin{theorem}\label{Thm: norm of nn error}
Let $u_{\tilde\theta}$ be a quasi-minimizer defined in \eqref{minimizer of phi}, and let $u$ be the exact solution to Problem \eqref{eq:weakPDE}. Assume that $u_{\theta^{k}} \in U_{NN}$ is the neural network obtained at the $k$th iteration of the sequential algorithm and $\eta_{k}(u_{\theta^{k}}) \leq \eta_{k}(u_{\theta^{k-1}})$. Then, we have
\begin{align*}
    \|u_{\tilde\theta} - u_{\theta^{k}}\|_U \leq \dfrac{1}{\alpha}\left(\mu\inf_{\theta \in \mathbb{R}^s}\|u-u_\theta\|_U + \delta_{NN} + 2\delta^k\epsilon_{0}\right),
\end{align*}
where $\mu$ and $\alpha$ are the constants from Equations \eqref{eq:bound_cont} and \eqref{eq:inf_sup_cont}, respectively.
\end{theorem}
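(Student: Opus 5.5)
We bound $\|u_{\tilde\theta}-u_{\theta^k}\|_U$ by the inf-sup condition \eqref{eq:inf_sup_cont} applied to $w:=u_{\tilde\theta}-u_{\theta^k}\in U$. By bilinearity of $b$, for every $v\in V$ we have $b(u_{\tilde\theta}-u_{\theta^k},v)=r(u_{\theta^k},v)-r(u_{\tilde\theta},v)=(\phi(u_{\theta^k})-\phi(u_{\tilde\theta}),v)_V$. Hence
\begin{equation*}
\alpha\|u_{\tilde\theta}-u_{\theta^k}\|_U\le \sup_{0\neq v\in V}\frac{b(u_{\tilde\theta}-u_{\theta^k},v)}{\|v\|_V}=\|\phi(u_{\theta^k})-\phi(u_{\tilde\theta})\|_V\le \|\phi(u_{\theta^k})\|_V+\|\phi(u_{\tilde\theta})\|_V .
\end{equation*}
It therefore suffices to bound the two continuous Riesz norms separately.

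For the first term, we reuse the argument of the proposition on the sequential strategy. By \eqref{Eq: true error upper bound}, with the triangle inequality applied before dividing by $\alpha$, we have $\|\phi(u_{\theta^k})\|_V\le \|\phi_k(u_{\theta^k})\|_V+\eta_k(u_{\theta^k})$. The LEARN condition gives $\|\phi_k(u_{\theta^k})\|_V\le\epsilon_k$. The hypothesis $\eta_k(u_{\theta^k})\le\eta_k(u_{\theta^{k-1}})$ together with the ADAPT condition gives $\eta_k(u_{\theta^k})\le\epsilon_k$. Since $\epsilon_k=\delta^k\epsilon_0$, this yields $\|\phi(u_{\theta^k})\|_V\le 2\delta^k\epsilon_0$.

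For the second term, we use the quasi-minimizer property \eqref{minimizer of phi}: $\|\phi(u_{\tilde\theta})\|_V\le\inf_\theta\|\phi(u_\theta)\|_V+\delta_{NN}$. For each $\theta$, the left inequality of \eqref{eq:robust estimation}, which follows from boundedness \eqref{eq:bound_cont} via $\|\phi(u_\theta)\|_V=\sup_v b(u-u_\theta,v)/\|v\|_V$, gives $\|\phi(u_\theta)\|_V\le\mu\|u-u_\theta\|_U$. Taking the infimum over $\theta$ gives $\|\phi(u_{\tilde\theta})\|_V\le\mu\inf_\theta\|u-u_\theta\|_U+\delta_{NN}$.

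Combining the two bounds and dividing by $\alpha$ gives the claim. The only step needing care is the first one: we must bound the difference of the two networks through the difference of their Riesz representatives, rather than passing through $u$ with the triangle inequality. Going through $u$ would produce an extra factor $1/\alpha$ on the $\mu\inf$ term, whereas the stated bound has only an overall $1/\alpha$.
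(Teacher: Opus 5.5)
Your proof is correct and is essentially the paper's argument. Both apply the inf-sup condition to $u_{\tilde\theta}-u_{\theta^k}$, identify the supremum with $\|\phi(u_{\theta^k})-\phi(u_{\tilde\theta})\|_V$, and split it with the triangle inequality. One piece is then bounded by LEARN/ADAPT together with the monotonicity hypothesis, and the other by the quasi-minimizer property and the lower bound in \eqref{eq:robust estimation}. You also get the sign $b(u_{\tilde\theta}-u_{\theta^k},v)=r(u_{\theta^k},v)-r(u_{\tilde\theta},v)$ and the choice of the primal rather than the adjoint inf-sup condition right, where the paper does not.

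Your closing remark is mistaken, though. Passing through $u$ and using $\|u-w\|_U\le\alpha^{-1}\|\phi(w)\|_V$ for $w=u_{\tilde\theta}$ and $w=u_{\theta^k}$ gives exactly the same bound. No extra factor $1/\alpha$ appears, so that step needs no special care.
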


\begin{proof}
    Starting from the adjoint inf-sup condition in Equation \eqref{eq:inf_sup_cont_dual}, utilizing the linearity of the bilinear form $b(\cdot,\cdot)$, and using the definition of the Riesz representatives, we have:
    \begin{align*}
        \|u_{\tilde\theta} - u_{\theta^{k}}\|_U &\leq \dfrac{1}{\alpha}\sup\limits_{0\neq v \in V}\dfrac{b(u_{\tilde\theta} - u_{\theta^{k}},v)}{\|v\|_V} 
        = \dfrac{1}{\alpha}\sup\limits_{0\neq v \in V}\dfrac{r(u_{\tilde\theta},v) - r(u_{\theta^{k}},v)}{\|v\|_V} \\
        &= \dfrac{1}{\alpha}\sup\limits_{0\neq v \in V}\dfrac{(\phi(u_{\tilde\theta})-\phi(u_{\theta^{k}}),v)_V}{\|v\|_V} 
        = \dfrac{1}{\alpha}\|\phi(u_{\tilde\theta})-\phi(u_{\theta^{k}})\|_V. 
    \end{align*}
    Applying the triangle inequality, we obtain:
    \begin{align*}
        \|u_{\tilde\theta} - u_{\theta^{k}}\|_U \leq \dfrac{1}{\alpha}\left(\|\phi(u_{\tilde\theta})\|_V + \|\phi(u_{\theta^{k}})-\phi_k(u_{\theta^{k}})\|_V + \|\phi_k(u_{\theta^{k}})\|_V\right).
    \end{align*}
    Using the definition of a quasi-minimizer \eqref{minimizer of phi} and substituting the LEARN and ADAPT conditions, we obtain:
    \begin{align*}
        \|u_{\tilde\theta} - u_{\theta^{k}}\|_U \leq \dfrac{1}{\alpha}\left(\inf_{\theta \in \mathbb{R}^s}\|\phi(u_{\theta})\|_V + \delta_{NN} + 2\epsilon_{k}\right).
    \end{align*}
    Finally, applying Equation \eqref{eq:robust estimation} completes the proof of this theorem.
\end{proof}

\begin{remark}
    Theorem \ref{Thm: norm of nn error} demonstrates that the sequential algorithm can guide the neural network toward a quasi-minimizer when the neural network architecture is sufficiently deep or wide to ensure that the error of the best approximation is very small.
\end{remark}

\subsection{Refinement indicator}\label{sec: refinement indicator}

We note that standard RVPINNs might reach a performance plateau because, after some training iterations, the discrete Riesz representative in the finite-dimensional test space $V_k$ fails to adequately approximate the exact Riesz representative in the infinite-dimensional test space $V$. 
To make the discrepancy $\eta_k(u_\theta)=\|\phi(u_\theta)-\phi_k(u_\theta)\|_V$ small, the discrete test space $V_k$ must be sufficiently rich. However, training a neural network using an excessively refined test space from the beginning is computationally expensive. While one could use uniform refinement once the loss reaches a target level, a more efficient approach is adaptive refinement. By employing an appropriate refinement indicator, we can enrich the test space only where necessary, maintaining accuracy without incurring unnecessary computational costs. 

We define a computable refinement indicator that acts as a surrogate for the discrepancy between the continuous and discrete Riesz representatives. 
Specifically, we introduce a two-level refinement criterion that compares Riesz representatives from two discrete test spaces. Let $V_k \subset \hat V_k \subset V$ be two nested discrete spaces, where $V_k$ is the original test space (based on some polynomial order) and $\hat{V}_k$ is an enriched space (for example, by using higher-order polynomials or a finer subgrid). We denote the discrete Riesz representatives associated with $V_k$ and $\hat{V}_k$ as $\phi_k(u_{\theta})$ and $\hat\phi_k(u_{\theta})$, respectively. Let $\mathcal{T}_k$ be the (triangular) mesh associated with these spaces, and let $T\in \mathcal{T}_k$ denote an element (triangle) in the mesh. For each element $T$, we compute the local discrepancy indicator:
    \begin{equation}\label{Eq: two-grid indicator}
        \iota_k^2(T;u_\theta) := \|\hat\phi_k(u_{\theta}) - \phi_k(u_{\theta})\|_{V(T)}^2.
    \end{equation} 
This indicator measures the local difference between the two discrete Riesz representatives and identifies regions where the test space $V_k$ needs further enrichment. The global two-level indicator is defined by:
\begin{equation}\label{Eq: global estimator}
    \iota_k(\mathcal{T}_k;u_\theta):=\|\hat\phi_k(u_\theta) - \phi_k(u_\theta)\|_V=\sqrt{\sum_{T \in \mathcal{T}_k}\iota_k^2(T;u_\theta)}.
\end{equation}
The following theorem shows that the global indicator is equivalent to the true discrepancy 
$\|\phi(u_\theta) - \phi_k(u_\theta)\|_V$. This holds under a standard saturation assumption \cite[Section 5.2]{ainsworth2000posteriori}, which assumes that the enriched Riesz representative  
$\hat{\phi}_k(u_\theta)$ is a better approximation of the continuous representative $\phi(u_\theta)$ than $\phi_k(u_\theta)$. 
This equivalence establishes the reliability and efficiency of $\iota_k$ as a refinement indicator for the adaptive process.

\begin{theorem}[Equivalence of the two-level indicator]
Suppose the saturation assumption holds: there exists a constant $\beta \in [0,1)$ such that 
$$\|\phi(u_\theta) - \hat\phi_{k}(u_\theta) \|_{V} \leq \beta \| \phi(u_\theta) - \phi_{k}(u_\theta) \|_{V}.$$
Then, the global refinement indicator satisfies the following two-sided bound:
$$ \|\hat \phi_k(u_\theta) - \phi_k(u_\theta) \|_{V} \leq \|\phi(u_\theta) - \phi_k(u_\theta) \|_{V} \leq \frac{1}{\sqrt{1 - \beta^2}}\|\hat \phi_k(u_\theta) - \phi_k(u_\theta) \|_{V}.$$
\end{theorem}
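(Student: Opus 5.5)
The whole argument rests on one structural fact: the discrete Riesz representatives are $V$-orthogonal projections of the continuous one. Comparing \eqref{optimal discrete Riesz} with \eqref{eq:discreteRiesz}, we have $(\phi(u_\theta)-\phi_k(u_\theta),v_k)_V=0$ for all $v_k\in V_k$. The same relation holds with $\hat\phi_k(u_\theta)$ and $\hat V_k$. Hence $\phi_k(u_\theta)$ and $\hat\phi_k(u_\theta)$ are the $V$-orthogonal projections of $\phi(u_\theta)$ onto $V_k$ and $\hat V_k$, respectively.

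Nestedness $V_k\subset\hat V_k$ then gives the Galerkin orthogonality
\[
(\phi(u_\theta)-\hat\phi_k(u_\theta),\,w)_V=0 \quad \forall\, w\in\hat V_k .
\]
Since $\hat\phi_k(u_\theta)-\phi_k(u_\theta)\in\hat V_k$, we may take $w=\hat\phi_k(u_\theta)-\phi_k(u_\theta)$. Writing $\phi-\phi_k=(\phi-\hat\phi_k)+(\hat\phi_k-\phi_k)$ and expanding the square, the cross term vanishes, yielding the Pythagorean identity
\begin{equation*}
\|\phi(u_\theta)-\phi_k(u_\theta)\|_V^2=\|\phi(u_\theta)-\hat\phi_k(u_\theta)\|_V^2+\|\hat\phi_k(u_\theta)-\phi_k(u_\theta)\|_V^2 .
\end{equation*}

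Both bounds follow from this identity. The lower bound (efficiency) comes from dropping the nonnegative term $\|\phi-\hat\phi_k\|_V^2$, and it does not need the saturation assumption. For the upper bound (reliability), we insert the saturation assumption into the first term on the right to get
\[
\|\phi-\phi_k\|_V^2\le\beta^2\|\phi-\phi_k\|_V^2+\|\hat\phi_k-\phi_k\|_V^2 .
\]
Rearranging gives $(1-\beta^2)\|\phi-\phi_k\|_V^2\le\|\hat\phi_k-\phi_k\|_V^2$. Because $\beta<1$, we can divide by $1-\beta^2>0$ and take square roots to reach the stated constant.

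The only step that requires care is the orthogonality. One must check that the discrete Riesz problems on $V_k$ and $\hat V_k$ use the same $V$ inner product as the continuous one, and that the spaces are genuinely nested. Both hold by construction in \eqref{eq:discreteRiesz} and by the choice $V_k\subset\hat V_k\subset V$. Everything after that is elementary algebra.
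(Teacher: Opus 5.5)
Your proof is correct and is essentially the paper's argument: the upper bound comes from the same Pythagorean identity, namely Galerkin orthogonality of $\phi(u_\theta)-\hat\phi_k(u_\theta)$ against $\hat\phi_k(u_\theta)-\phi_k(u_\theta)\in\hat V_k$, combined with the saturation assumption. The only difference is in the lower bound. You drop the term $\|\phi(u_\theta)-\hat\phi_k(u_\theta)\|_V^2\ge 0$ from the identity, whereas the paper uses orthogonality plus Cauchy--Schwarz and then cancels a factor of $\|\hat\phi_k(u_\theta)-\phi_k(u_\theta)\|_V$; your route is slightly cleaner because it never divides by a norm that might vanish.
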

\begin{proof}
    Lower bound: Using Galerkin orthogonality $(\hat\phi_k(u_\theta) - \phi(u_\theta), v)_V = 0$ for all \(v \in \hat V_k\), and the Cauchy-Schwarz inequality, we have
\begin{align*}
    \nonumber
    \| \hat\phi_k(u_\theta) - \phi_k(u_\theta) \|_{V}^2 
    &= (\hat\phi_k(u_\theta) - \phi_k(u_\theta), \hat\phi_k(u_\theta) - \phi_k(u_\theta))_V \\
    &= (\hat\phi_k(u_\theta) - \phi(u_\theta), \hat\phi_k(u_\theta) - \phi_k(u_\theta))_V \\
    &\qquad + (\phi(u_\theta) - \phi_k(u_\theta), \hat\phi_k(u_\theta) - \phi_k(u_\theta))_V \\
    &\leq 0 + \|\phi(u_\theta) - \phi_k(u_\theta) \|_{V}\|\hat\phi_k(u_\theta) - \phi_k(u_\theta) \|_{V}.
\end{align*}
By canceling, we obtain the desired lower bound.

Upper bound: Expanding and using Galerkin orthogonality, we have
\begin{align*}
\|\phi(u_\theta) - \phi_k(u_\theta)\|_{V}^2 & = \|\phi(u_\theta) - \hat\phi_k(u_\theta)\|^2_{V} + 2(\phi(u_\theta) - \hat\phi_k(u_\theta), \hat\phi_k(u_\theta) - \phi_k(u_\theta))_V \\
&\qquad + \|\hat\phi_k(u_\theta) - \phi_k(u_\theta)\|^2_{V} \\
& = \|\phi(u_\theta) - \hat\phi_k(u_\theta)\|^2_{V} + \|\hat\phi_k(u_\theta) - \phi_k(u_\theta)\|^2_{V}.
\end{align*}
Applying the saturation assumption gives
\begin{equation*}
   \|\phi(u_\theta) - \phi_k(u_\theta)\|_{V}^2 \leq \beta^2 \|\phi(u_\theta) - \phi_{k}(u_\theta)\|^2_{V} + \|\hat\phi_k(u_\theta) - \phi_k(u_\theta)\|^2_{V}.
\end{equation*}
Rearranging yields $\sqrt{1 - \beta^2}\|\phi(u_\theta) - \phi_{k}(u_\theta)\|_{V} \leq \|\hat\phi_k(u_\theta) - \phi_k(u_\theta)\|_{V}$
which completes the proof.
\end{proof}

\begin{remark}
    One could use an a posteriori error estimator for $\|\phi(u_{\theta}) - \phi_k(u_{\theta})\|_{V}^2$ as a refinement indicator. In \cite{fuhrer2025posteriori}, this estimator is an additional term in the RVPINN loss function.
\end{remark}

\section{Numerical results}\label{sec:results}

In this section, we provide a practical algorithm for the adaptivity strategy in Section \ref{sec: post-training adap}. We demonstrate the performance of the proposed algorithm on three benchmark problems arising from the model elliptic problem introduced in Section \ref{sec: model}: a smooth Poisson problem, an elliptic interface problem with a kink solution, and a Poisson problem on an L-shaped domain with a corner singularity.

\subsection{Practical adaptivity strategy}\label{sec:algorithm}

The practical adaptive procedure presented here is based on the standard SOLVE--ESTIMATE--MARK--REFINE structure in the classical adaptive finite element method (AFEM) loop \cite{cohen2012convergence}. We integrate these processes into the sequential adaptivity strategy in Section \ref{sec: post-training adap}. We use $\iota_k(\mathcal{T}_k;u_\theta)$ as a computable error estimator for $\eta_k(u_\theta)$. We assume that the neural network architecture has sufficient expressive power to approximate the solution and that the nonlinear optimizer (e.g., Adam) effectively minimizes the loss function.

Let $\mathcal{T}_0$ be an initial mesh of the domain, and let $\mathcal{T}_k$ denote the mesh after $k$ refinement steps. We define $V_k$ as the discrete test space associated with $\mathcal{T}_k$, typically constructed using low-order polynomials. Let $\mathcal{L}_k$ be the corresponding loss function, and let $u_{\theta^{k}}$ be the neural network approximation associated with $\mathcal{L}_k$. Given a starting tolerance $\epsilon_0 >0$, a scaling factor $0 < \delta <1$, and a marking parameter $0 < \gamma \leq 1$, we initially train the neural network to obtain $u_{\theta^0}$ such that $\sqrt{\mathcal{L}_0(u_{\theta^0})}\leq \epsilon_0$. 
The algorithm for the practical strategy is described in Algorithm \ref{Algorithm: Adaptive RVPINNs}.

\begin{algorithm}[h!]
\begin{algorithmic}[1]
\REQUIRE $u_{\theta^0}$, $\mathcal{T}_0$, $\mathcal{L}_0$, $\epsilon_0$, $\delta, \gamma$, $K$ \smallskip
\FOR{k = 1 \textbf{to} K}
\STATE \COMMENT{\textbf{--- ADAPT (Test mesh refinement) ---}}
\WHILE{$\iota_{k-1}(\mathcal{T}_{k-1};u_{\theta^{k-1}}) > \delta^{k} \epsilon_0$} \smallskip
    \STATE $\{\iota_{k-1}^2(T;u_{\theta^{k-1}})\}_{T \in \mathcal{T}_{k-1}} = \mathrm{ESTIMATE}(\mathcal{T}_{k-1},u_{\theta^{k-1}})$
    \STATE $\mathcal{M}_{k-1} = \mathrm{MARK}(\{\iota_{k-1}^2(T;u_{\theta^{k-1}})\}_{T \in \mathcal{T}_{k-1}},\gamma)$
    \STATE $\mathcal{T}_{k-1} = \mathrm{REFINE}(\mathcal{T}_{k-1},\mathcal{M}_{k-1})$ \smallskip
\ENDWHILE \smallskip
    \STATE $\mathcal{T}_{k} \leftarrow \mathcal{T}_{k-1}$. \smallskip
    \STATE \COMMENT{\textbf{--- LEARN (Neural network optimization) ---}}
\WHILE{$\sqrt{\mathcal{L}_{k}(u_{\theta^{k-1}})} > \delta^{k}\epsilon_0$}       \smallskip
    \STATE $u_{\theta^{k-1}} = \mathrm{SOLVE}(\mathcal{T}_{k},u_{\theta^{k-1}})$ 
    \smallskip
\ENDWHILE \smallskip
\STATE $u_{\theta^{k}} \leftarrow u_{\theta^{k-1}}$.
\ENDFOR \smallskip
\RETURN Optimized neural network $u_{\theta^{K}}$.
\end{algorithmic}
\caption{Adaptive training algorithm.}\label{Algorithm: Adaptive RVPINNs}
\end{algorithm}

Procedure descriptions

\begin{itemize}  
    \item ESTIMATE: For a predefined test space $V_{k-1}$ (based on some polynomial order) on the current mesh $\mathcal{T}_{k-1}$, we construct an enriched test space $\hat V_{k-1}$ (typically by increasing the polynomial order) such that $V_{k-1} \subset \hat V_{k-1} \subset V$. We then compute the discrete Riesz representatives $\phi_{k-1}(u_{\theta^{k-1}})$ and $\hat\phi_{k-1}(u_{\theta^{k-1}})$ with respect to these spaces. For each element $T \in \mathcal{T}_{k-1}$, we calculate the local discrepancy indicator $\iota_{k-1}^2(T;u_{\theta^{k-1}})$ as defined in \eqref{Eq: two-grid indicator}. 
    
    \item MARK: We apply the D\"orfler marking criterion \cite{dorfler1996convergent} to identify the elements that account for the largest portion of the discrepancy. In particular, we find a smallest subset of elements $\mathcal{M}_{k-1} \subset \mathcal{T}_{k-1}$ such that: 
    \begin{equation}\label{Dorfler marking}
        \sum_{T \in \mathcal{M}_{k-1}}\iota_{k-1}^2(T;u_{\theta^{k-1}}) \geq \gamma \sum_{T \in \mathcal{T}_{k-1}}\iota_{k-1}^2(T;u_{\theta^{k-1}}).
    \end{equation}

    \item REFINE: We refine the marked elements in $\mathcal{M}_{k-1}$ using a conforming refinement scheme to generate a finer mesh $\mathcal{T}_{k-1}$. We then compute the global estimator $\iota_{k-1}(\mathcal{T}_{k-1};u_{\theta^{k-1}})$ in \eqref{Eq: global estimator}. If $\iota_{k-1}(\mathcal{T}_{k-1};u_{\theta^{k-1}}) > \delta^{k}\epsilon_0$, we repeat all procedures in ADAPT to further enrich the test space. Otherwise, this mesh defines an enriched test space mesh $\mathcal{T}_{k}$ for the next level. 

    \item SOLVE: We train the neural network on the current mesh $\mathcal{T}_{k}$ by minimizing the loss function $\mathcal{L}_{k}$ using Adam optimizer \cite{kingma2014adam}. This process is repeated until we obtain the neural network $u_{\theta^{k}}$ such that $\sqrt{\mathcal{L}_{k}(u_{\theta^{k}})}\leq\delta^{k}\epsilon_0$.
\end{itemize}

In the following sections, we present the numerical results for Algorithm \ref{Algorithm: Adaptive RVPINNs}. We analyze the convergence rates of our approach and compare them with those of the standard Galerkin FEM using identical trial and test spaces and strong imposition of Dirichlet boundary conditions (hereafter referred to simply as FEM). The comparison is made with respect to the test-space dimension, not total computational cost.
All reported true errors are computed using a four-point quadrature rule per element on a highly refined uniform mesh (consisting of $8192$ elements for square domains and $6144$ elements for the L-shaped domain).

\subsection{Smooth solution}\label{subsec: smooth}

We first consider the Poisson problem on $\Omega = (0,1)^2$ with homogeneous Dirichlet boundary condition: find $u$ such that
\begin{equation}\label{Poisson smooth}
    \begin{cases}
        \begin{aligned}
        -\Delta u &= f &&\textrm{in } \Omega, \\
        u &= 0 &&\textrm{on } \Gamma,
        \end{aligned}
    \end{cases} 
\end{equation}
where $f(x,y) = 2\pi^2\sin(\pi x)\sin(\pi y) \in L^2(\Omega)$. This problem admits a smooth solution $u(x,y) = \sin(\pi x)\sin(\pi y)$. The variational formulation of \eqref{Poisson smooth} is to find $u \in H_0^1(\Omega)$ such that
\begin{equation}\label{Poisson smooth weak}
    r(u,v) := (f,v)_{L^2(\Omega)} - (\nabla u, \nabla v)_{L^2(\Omega;\mathbb{R}^2)}  = 0 \quad \forall v \in H_0^1(\Omega).
\end{equation}

We approximate the solution $u$ using a neural network $u_{\theta}(x,y)=x(1-x)y(1-y)\bar{u}_\theta(x,y)$, where $\bar{u}_{\theta}$ is a six-layer, fully connected architecture with hyperbolic tangent activation functions, where each hidden layer contains $64$ neurons. This ensures that the neural network vanishes on the boundary. The domain $\Omega$ is initially partitioned into $32$ uniform triangular elements to form the starting mesh $\mathcal{T}_0$ as shown in Figure \ref{Figure: mesh_square} (a). For the test space, we choose a discrete space $V_k := \mathbb{P}_0^1(\mathcal{T}_k) \subset H_0^1(\Omega)$, equipped with the energy inner product \eqref{eq:weighted_energy_inner_product}. Since $a \equiv 1$ in this example, this coincides with the standard $H_0^1$ inner product. The corresponding loss function is defined as in \eqref{eq:AVPINNs_loss_mat}.

To implement Algorithm \ref{Algorithm: Adaptive RVPINNs}, we set $\epsilon_0 = 0.5, \delta = 0.95$, and $K=120$. We compute integrals using a four-point quadrature rule per element. For training, we employ the Adam optimizer as a nonlinear solver with an adaptive learning rate, initialized at $0.0005$ and decayed by a factor of $0.9$ every $1{,}000$ iterations. We employ the refinement indicator defined in \eqref{Eq: two-grid indicator} and choose the enriched test space as $\hat{V}_k:=\mathbb{P}_0^2(\mathcal{T}_k)$. We refine all elements marked by the D\"orfler marking criterion with $\gamma = 0.2$.

\begin{figure}[t]
    \centering
    \subfloat[Initial mesh]{\includegraphics[width=0.3\linewidth]{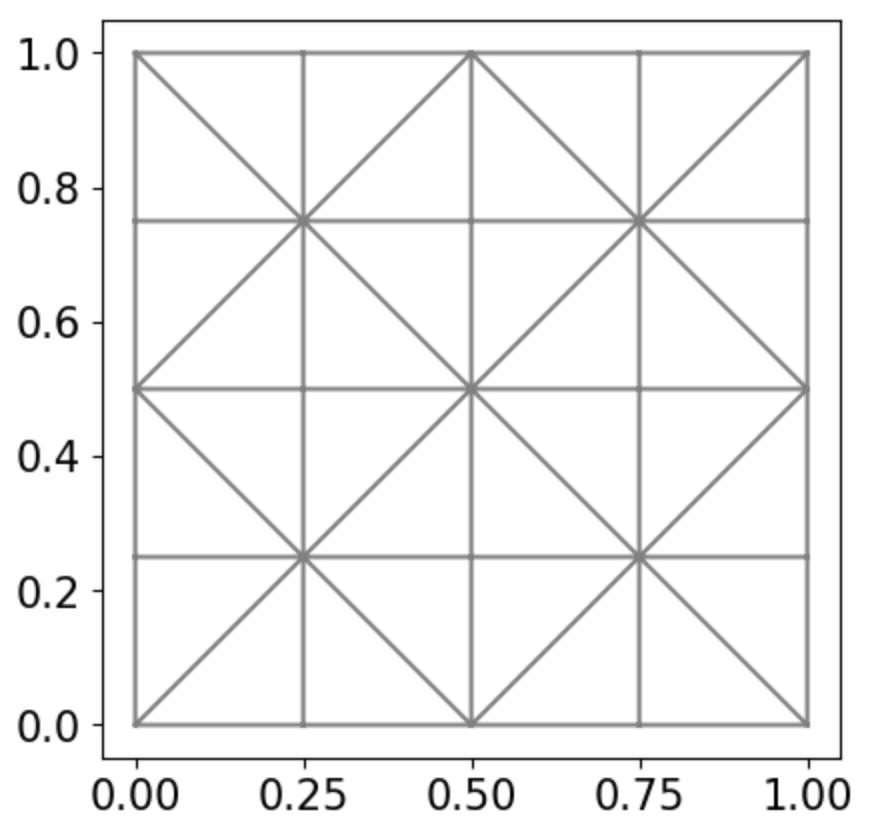}}\qquad 
    \subfloat[Refined mesh]{\includegraphics[width=0.3\linewidth]{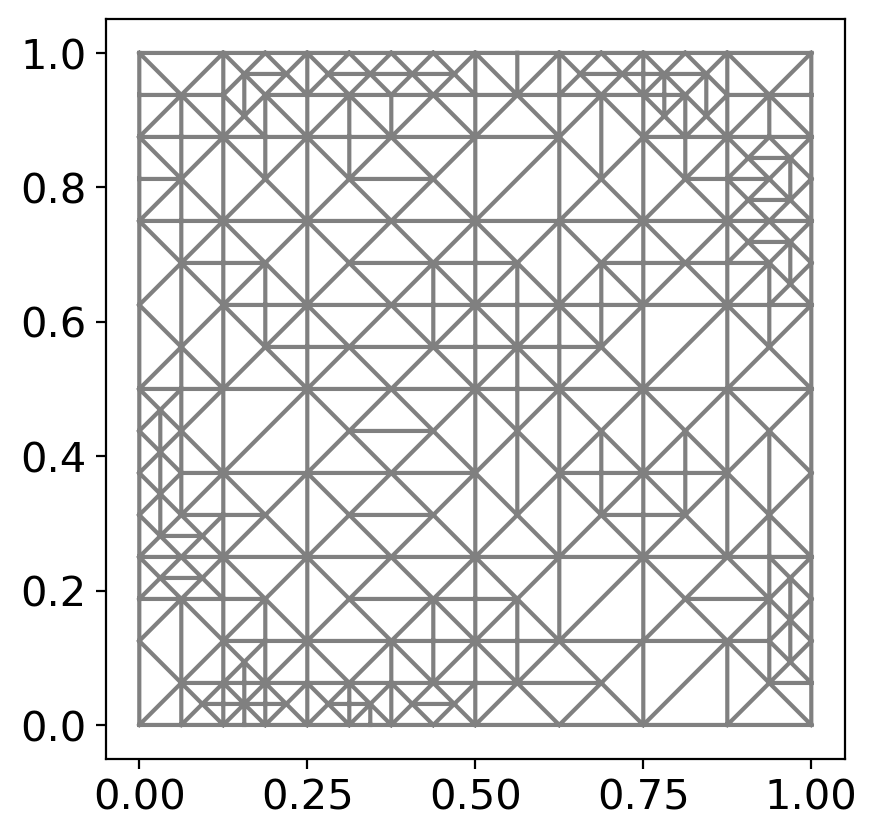}}
    \caption{Test-space meshes for the smooth solution.}
    \label{Figure: mesh_square}
\end{figure}

\begin{figure}[!]
    \subfloat[$H^1$-error and square root of the loss vs. epochs]{\includegraphics[width=0.32\linewidth]{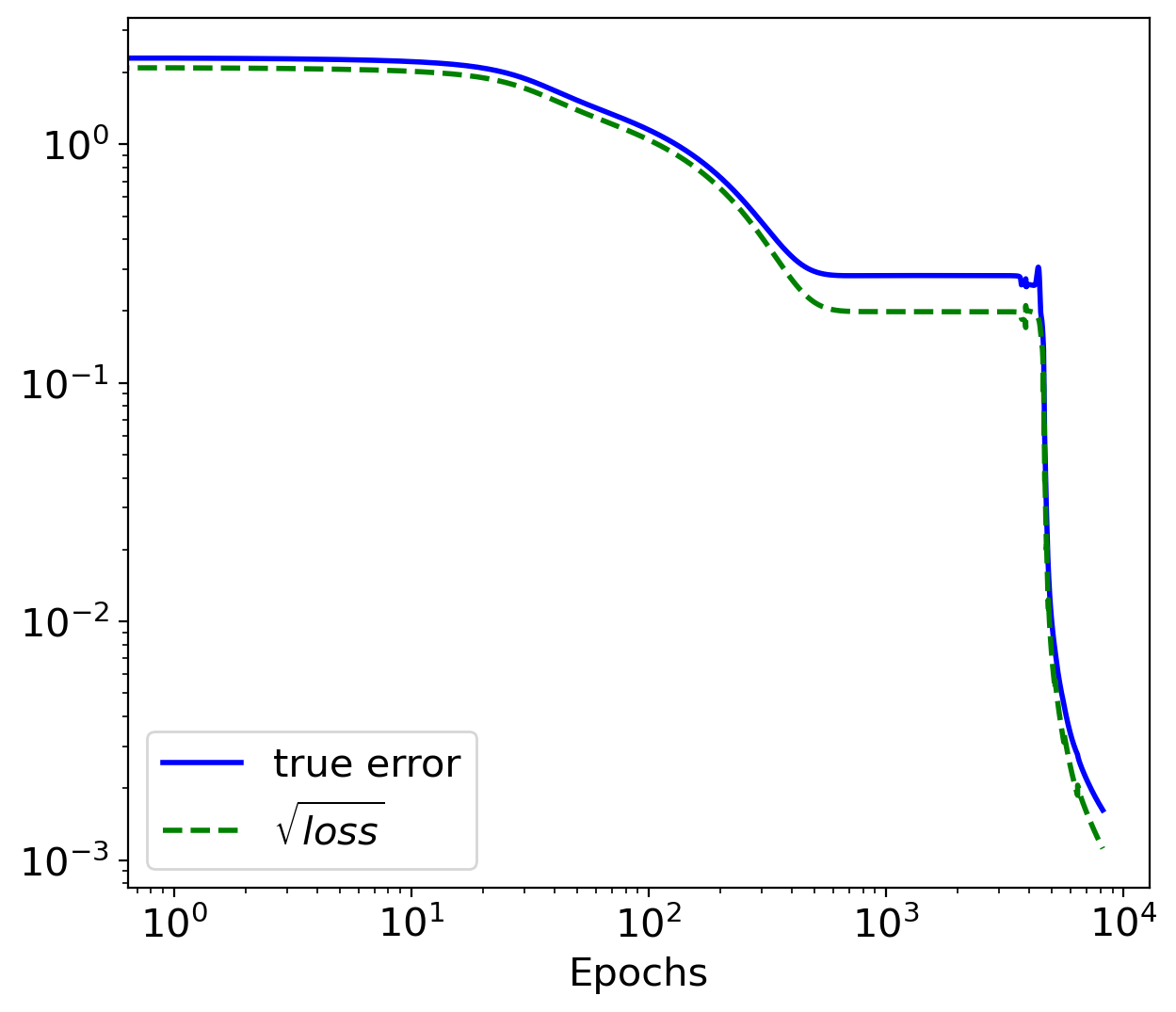}} ~
    \subfloat[$H^1$-error vs. test space dimension]{\includegraphics[width=0.33\linewidth]{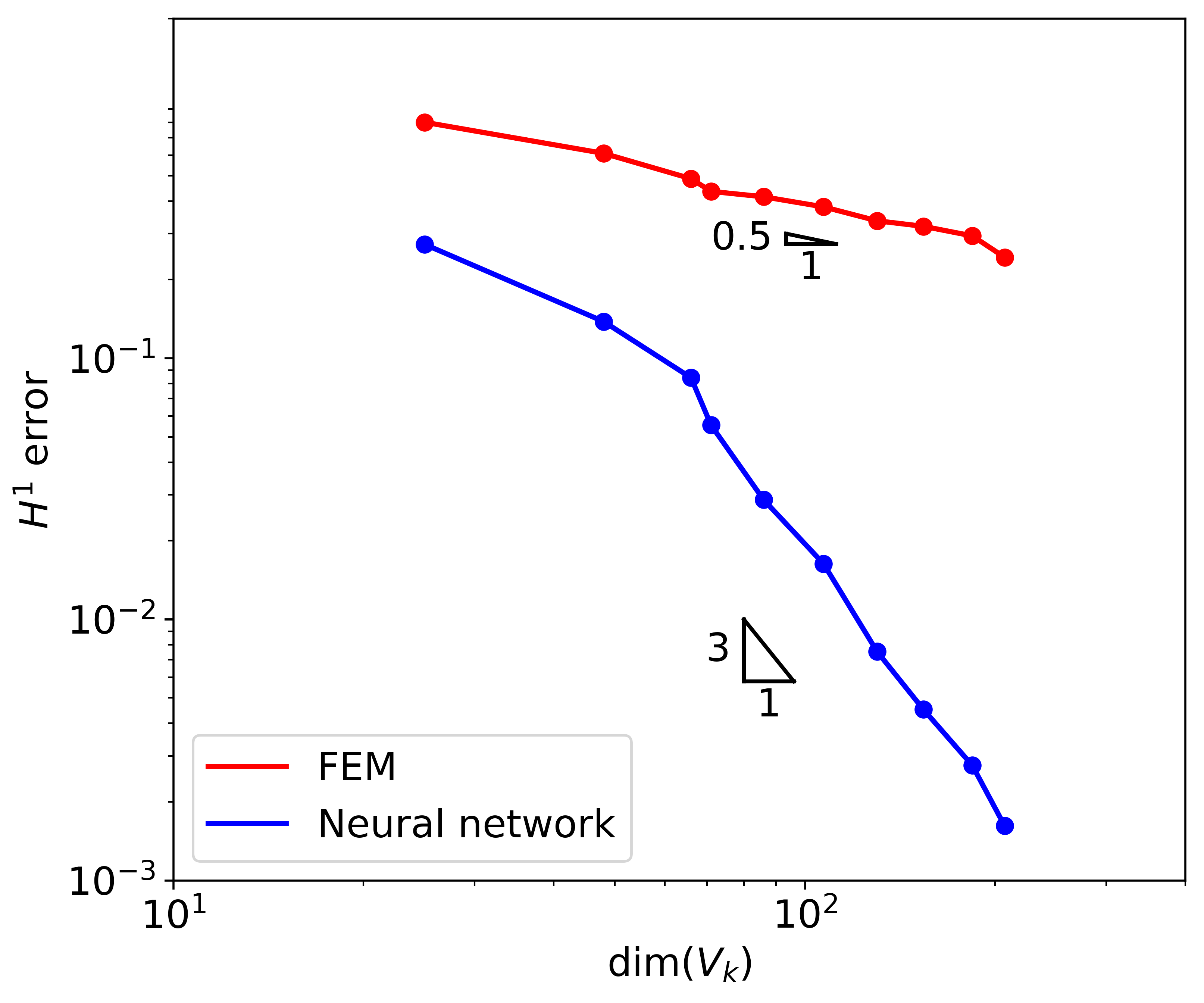}} ~
    \subfloat[$H^1$-error, $\sqrt{\mathrm{loss}}$, and refinement indicator vs. test space dimension]{\includegraphics[width=0.32\linewidth]{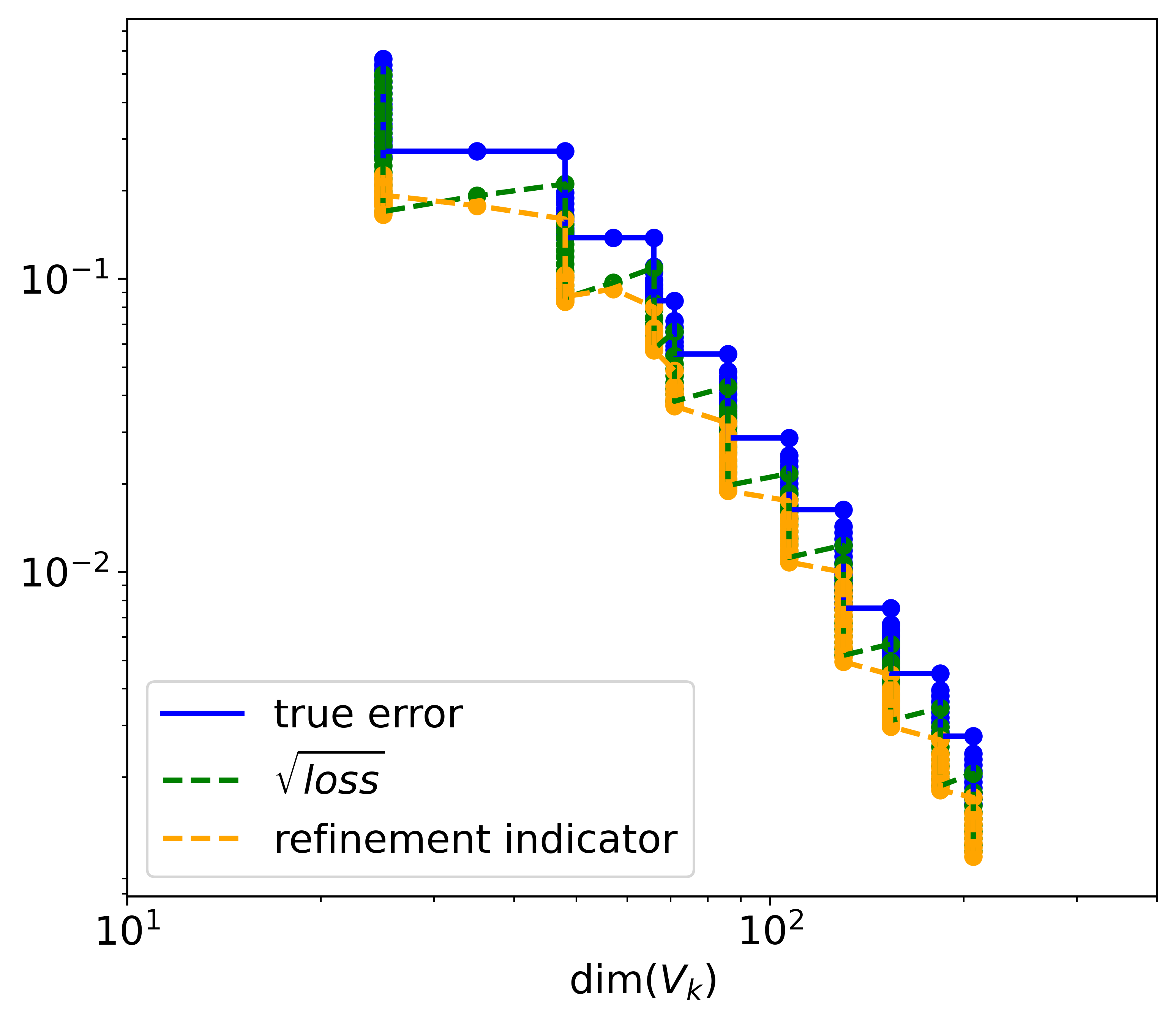}}
    \caption{Results for the smooth solution.}
    \label{Figure: square Poisson}
\end{figure}

Figure \ref{Figure: mesh_square} (b) illustrates the refined mesh, consisting of $448$ elements. Figure \ref{Figure: square Poisson} (a) demonstrates that the $H^1$-error, $\|u-u_{\theta}\|_{H^1(\Omega)}$, and the square root of the loss coincide throughout the training process. This alignment confirms that controlling the discrepancy between two Riesz representatives ensures training robustness. Furthermore, we evaluate the performance of the adaptive RVPINN algorithm by comparing it to a FEM implementation utilizing the same discrete test space and mesh configurations. This comparison is based on the $H^1$-norm of the true error versus the test-space dimension. In Figure \ref{Figure: square Poisson} (b), the convergence rate of the neural network approximation for this example is approximately $3$, exhibiting a faster empirical decay than the reference piecewise-linear FEM solution rate of $0.5$ (cf. \cite{cascon2008quasi}) when both errors are plotted against the test-space dimension. This reflects the high expressivity of the fixed neural network architecture, although this comparison does not account for the overall computational cost.

Figure \ref{Figure: square Poisson} (c) illustrates the relationship between the square root of the loss function and the refinement indicator as they reach the specified tolerances $\delta^k\epsilon_0$. While mesh refinement initially increases the loss due to the expansion of the discrete test space, it simultaneously reduces the refinement indicator. Conversely, training the neural network minimizes the loss and generally tends to reduce the refinement indicator, although this reduction is not strictly guaranteed. Empirically, training the neural network using Algorithm \ref{Algorithm: Adaptive RVPINNs} ensures a consistent reduction in the true error.

\subsection{Kink solution}

We consider the elliptic interface problem on $\Omega = (-1,1)^2$: find $u$ such that
\begin{equation}\label{elliptic interface}
    \begin{cases}
        \begin{aligned}
        -\nabla\cdot (a\nabla  u) &= f &&\textrm{in } \Omega, \\
        u &= g &&\textrm{on } \Gamma,
        \end{aligned}
    \end{cases} 
\end{equation}
where $a$ is a piecewise-constant diffusion coefficient. In polar coordinates $(\rho,\psi)$, $a$ is defined as
\begin{equation*}
    a(\rho,\psi) = \begin{cases}
        a_1, & \rho < \rho_0 \\
        a_2, & \rho \geq \rho_0,
    \end{cases}
\end{equation*}
with $\rho=\sqrt{x^2+y^2}$ and $\rho_0 = \pi/6.28$.
The functions $f$ and $g$ are given in polar coordinates by $f(\rho,\psi) = -9\rho$ and
\begin{equation}\label{eq: g interface}
    g(\rho,\psi)=\frac{\rho^3}{a_2}+\left(\frac{1}{a_1}-\frac{1}{a_2}\right)\rho_0^3.
\end{equation}
The exact solution to this problem \cite{wang2020mesh} is
\begin{equation}\label{Eq: sol interface}
    u(\rho,\psi) = \begin{cases}
        \frac{\rho^3}{a_1}, & \rho < \rho_0 \\
        \frac{\rho^3}{a_2}+\left(\frac{1}{a_1}-\frac{1}{a_2}\right)\rho_0^3, & \rho \geq \rho_0.
    \end{cases}
\end{equation}
In our experiment, we set $a_1 = 1$ and $a_2 = 5$.
The variational formulation of \eqref{elliptic interface} is to find $u \in H^1(\Omega)$ such that $u=g$ on $\Gamma$ and
\begin{equation}\label{elliptic interface weak}
    r(u,v) := (f,v)_{L^2(\Omega)} - (a\nabla u, \nabla v)_{L^2(\Omega;\mathbb{R}^2)}  = 0 \quad \forall v \in H_0^1(\Omega).
\end{equation}

\begin{figure}[t]
    \subfloat[Initial mesh]{\includegraphics[width=0.3\linewidth]{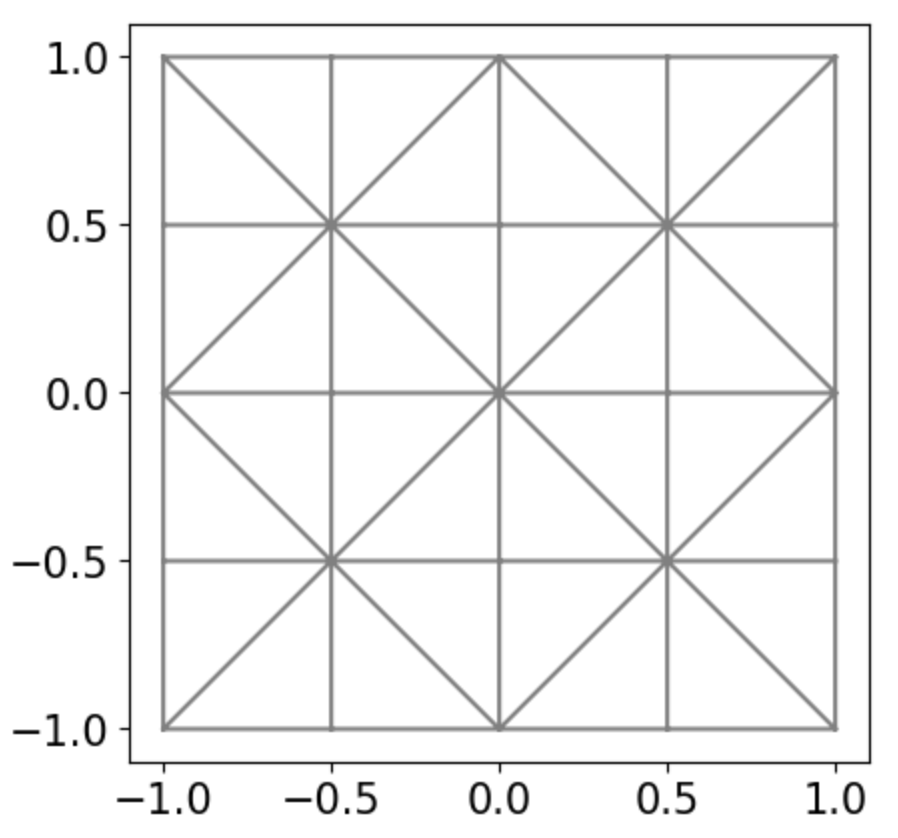}}~
    \subfloat[Refined mesh]{\includegraphics[width=0.3\linewidth]{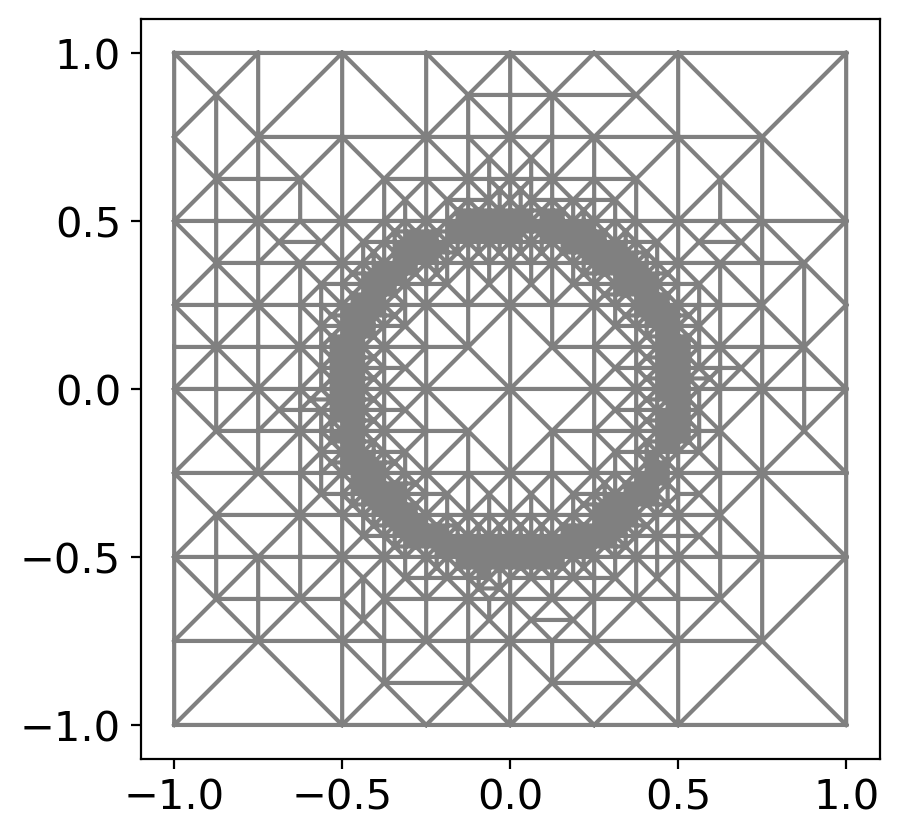}}~
    \subfloat[Pointwise absolute error]{\includegraphics[width=0.35\linewidth]{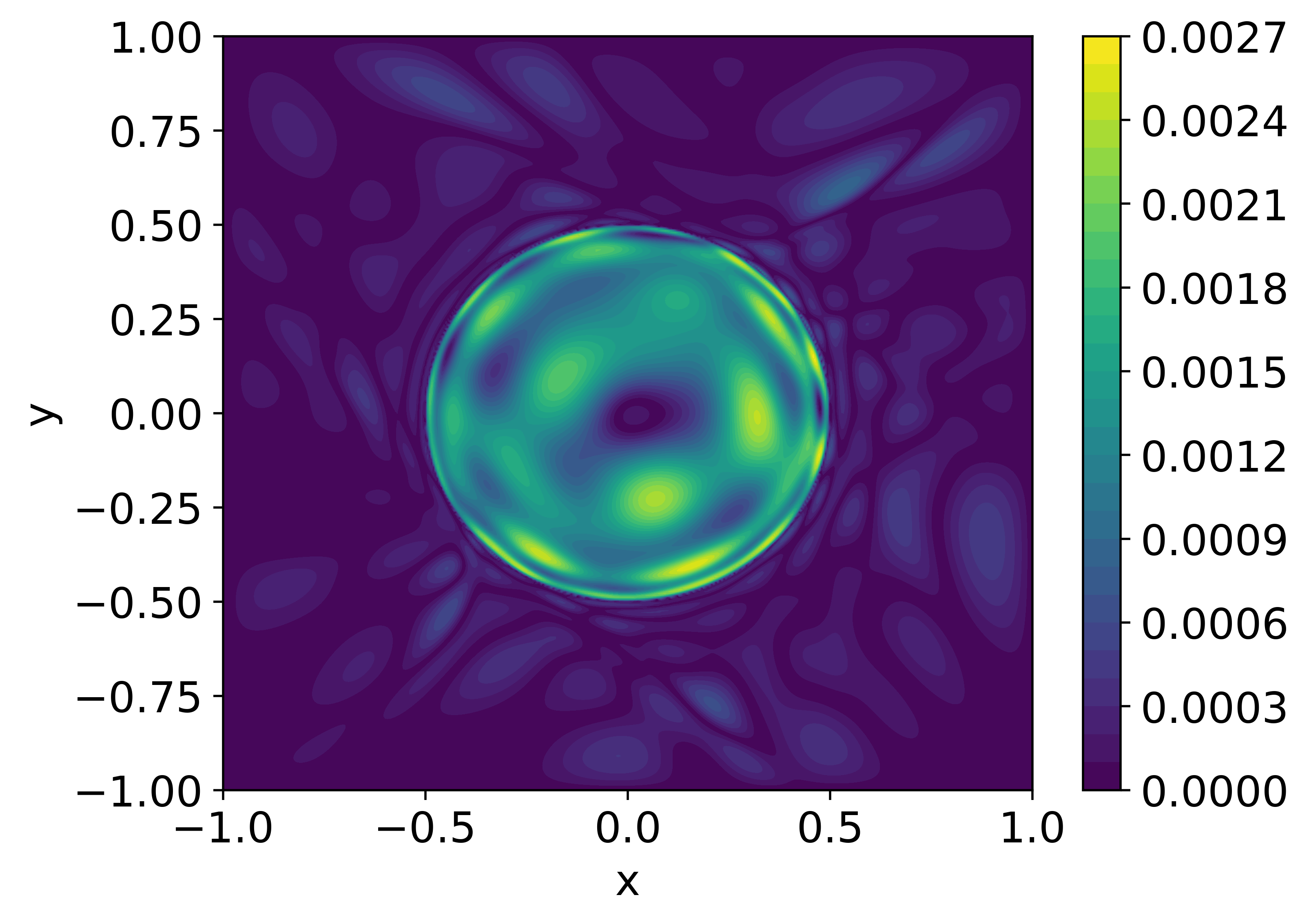}}
    \caption{Test-space mesh and pointwise absolute error for the kink solution.}
    \label{Figure: mesh kink}
\end{figure}

We approximate the solution $u$ using a neural network $u_{\theta}(x,y)=(1-x^2)(1-y^2)\bar{u}_\theta(x,y)+g(x,y)$, where $\bar{u}_{\theta}$ is a six-layer, fully connected architecture with hyperbolic tangent activation functions, where each hidden layer contains $64$ neurons, and $g(x,y)$ is the Cartesian-coordinate function of the polar-coordinate function in \eqref{eq: g interface}. This ensures that the boundary condition is strongly imposed on the neural network structure. The domain $\Omega$ is initially partitioned into $32$ uniform triangular elements to form the starting mesh $\mathcal{T}_0$ as shown in Figure \ref{Figure: mesh kink} (a). For the test space, we choose a discrete space $V_k:=\mathbb{P}_0^1(\mathcal{T}_k) \subset H_0^1(\Omega)$, equipped with the weighted energy inner product \eqref{eq:weighted_energy_inner_product}. The corresponding loss function is defined as in \eqref{eq:AVPINNs_loss_mat}. 
We set $K=50$ and use the training process and adaptive criteria described in the previous section. 

Figure \ref{Figure: mesh kink} (b) illustrates the final mesh, which consists of $4750$ elements. As expected, the adaptive refinement is concentrated along the circular interface where the diffusion coefficient is discontinuous, and the solution lacks smoothness.
The pointwise absolute errors are shown in Figure \ref{Figure: mesh kink} (c). It is evident that the neural network approximates the solution with high precision, maintaining absolute errors below $3\times10^{-3}$ throughout the entire domain. 

\begin{figure}[t]
    \centering
    \subfloat[Energy-norm error and $\sqrt{\mathrm{loss}}$ vs. epochs]{\includegraphics[width=0.35\linewidth]{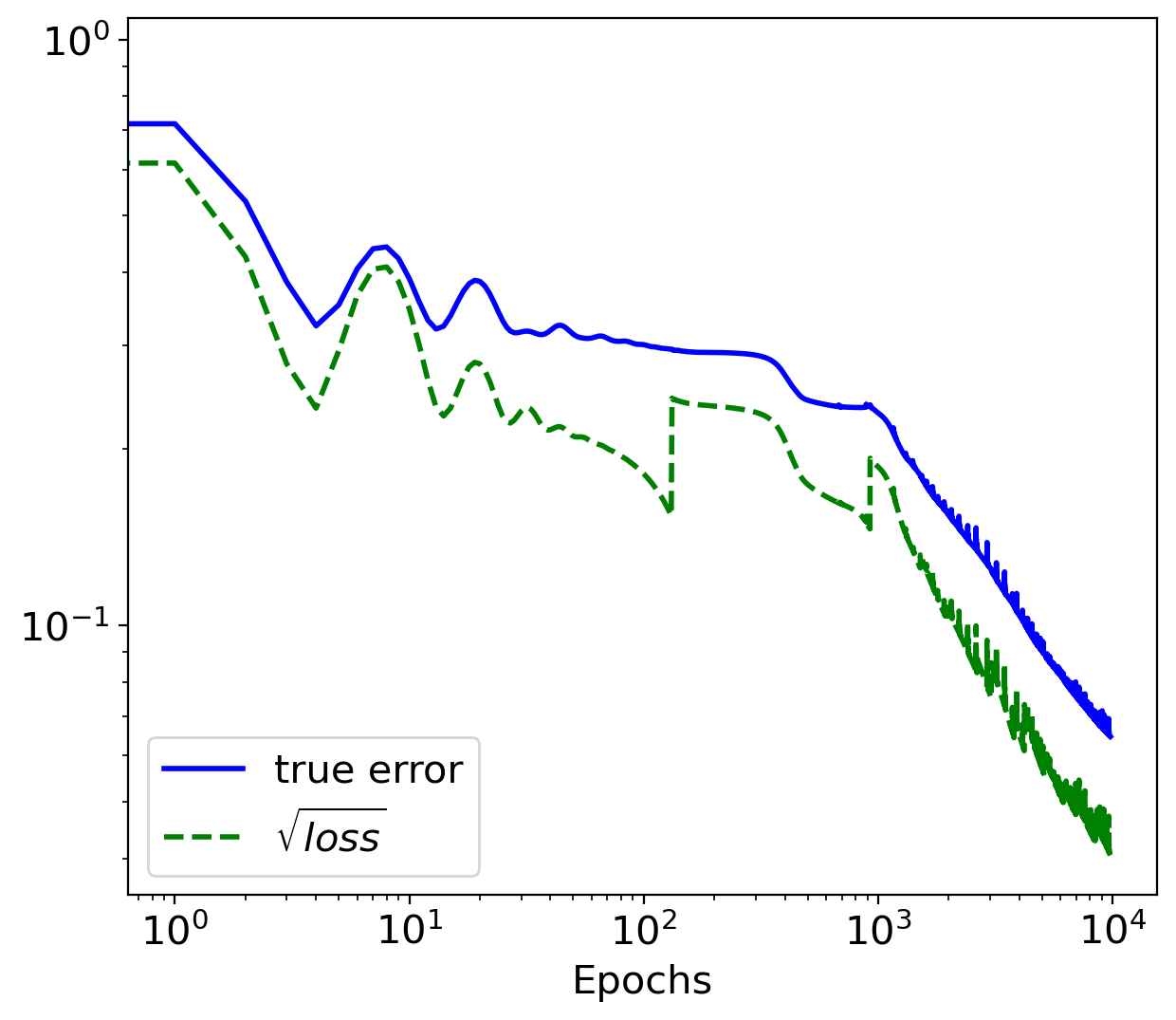}}\qquad 
    \subfloat[Energy-norm error vs. test space dimension]{\includegraphics[width=0.37\linewidth]{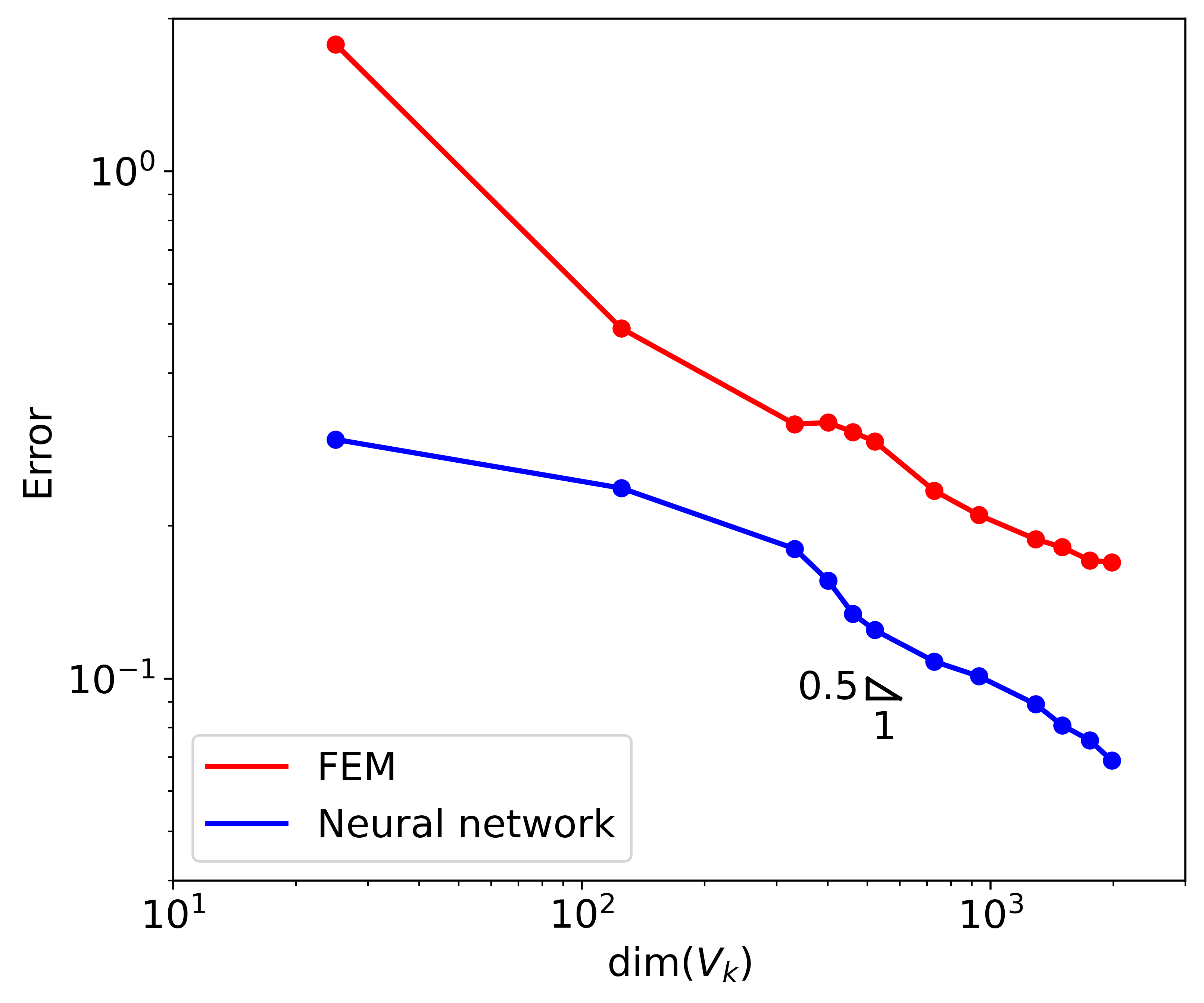}}
    \caption{Results for the kink solution.}
    \label{Figure: Poisson kink}
\end{figure}

Figure \ref{Figure: Poisson kink} (a) demonstrates that the error in the weighted energy norm, $\|a^{1/2}\nabla(u-u_{\theta})\|_{L^2(\Omega)}$, and the square root of the loss function remain generally parallel throughout the training process. As shown in Figure \ref{Figure: Poisson kink} (b), the observed convergence rate is approximately $0.5$, aligning with the optimal rate typically achieved by adaptive FEM. It highlights the performance of the proposed adaptive algorithm, which empirically achieves convergence rates that are competitive with, or superior to, those of standard adaptive FEM.

\subsection{Singular solution}

We consider the Poisson problem on the L-shaped domain $\Omega:=(-1,1)^2\backslash(-1,0]^2$:
find $u$ such that
\begin{equation}\label{Poisson problem}
    \begin{cases}
        \begin{aligned}
        -\Delta u &= 0 &&\textrm{in } \Omega, \\
        u &= g &&\textrm{on } \Gamma,
        \end{aligned}
    \end{cases} 
\end{equation}
where $g$ is defined in polar coordinates as $g(\rho,\psi)=\rho^{2/3}\sin(\frac{2}{3}(\pi-\psi))$.
The exact solution to this problem is $u(\rho,\psi) = g(\rho,\psi)$, which can be expressed in Cartesian coordinates as: 
\begin{equation}\label{Eq: L-shape sol}
    u(x,y) = \sqrt[3]{x^2+y^2}\sin\left(\frac{2}{3}(\pi-\mathrm{atan2}(y,x))\right),
\end{equation} 
where $\mathrm{atan2}$ denotes the four-quadrant inverse tangent. Note that the solution exhibits a corner singularity at the origin $(0,0)$, where the gradient becomes unbounded as $(x,y)\to(0,0)$, and belongs to the fractional Sobolev space $H^{5/3-\epsilon}(\Omega)$ for every $\epsilon > 0$ \cite{muga2023adaptive}.

The composite residual formulation of \eqref{Poisson problem} can be stated as follows: Find $u \in H^1(\Omega)$ such that 
\begin{equation}\label{eq: L-shape variational problem}
    \begin{aligned}
    r_\Omega(u,v) &:= -(\nabla u, \nabla v)_{L^2(\Omega;\mathbb{R}^2)}  = 0 &&\quad \forall v \in H_0^1(\Omega),\\
    r_\Gamma(u,\mathbf{v}) &:= \left<(g-u)|_{\Gamma}, \mathbf{v}\right>_{\Gamma} = 0 &&\quad \forall \mathbf{v} \in \mathbf{H}(\mathrm{div};\Omega).
    \end{aligned}
\end{equation}

We approximate the solution $u$ by a neural network $u_{\theta}$. The neural network $u_{\theta}$ is a six-layer, fully connected architecture with hyperbolic tangent activation functions, where each hidden layer contains $64$ neurons. The domain $\Omega$ is initially partitioned into $24$ uniform triangular elements to form the starting mesh $\mathcal{T}_0$ as shown in Figure \ref{Figure: mesh} (a). For the volume test space, we choose a discrete space $V_k^\Omega:=\mathbb{P}_0^1(\mathcal{T}_k) \subset H_0^1(\Omega)$, equipped with the energy inner product \eqref{eq:weighted_energy_inner_product}; here again $a \equiv 1$. For the boundary test space, we utilize the Raviart-Thomas space of order $1$, $V_k^\Gamma:=\mathbf{RT}^1(\mathcal{T}_k)$, which is a finite-dimensional subspace of $\mathbf{H}(\mathrm{div};\Omega)$. Since two variational formulations are considered, we define the loss function as in \eqref{Eq: loss two terms}. We set $K=70$ and use the training procedure described in the previous section. 

\begin{figure}[t]
    \centering
    \subfloat[Initial mesh]{\includegraphics[width=0.29\linewidth]{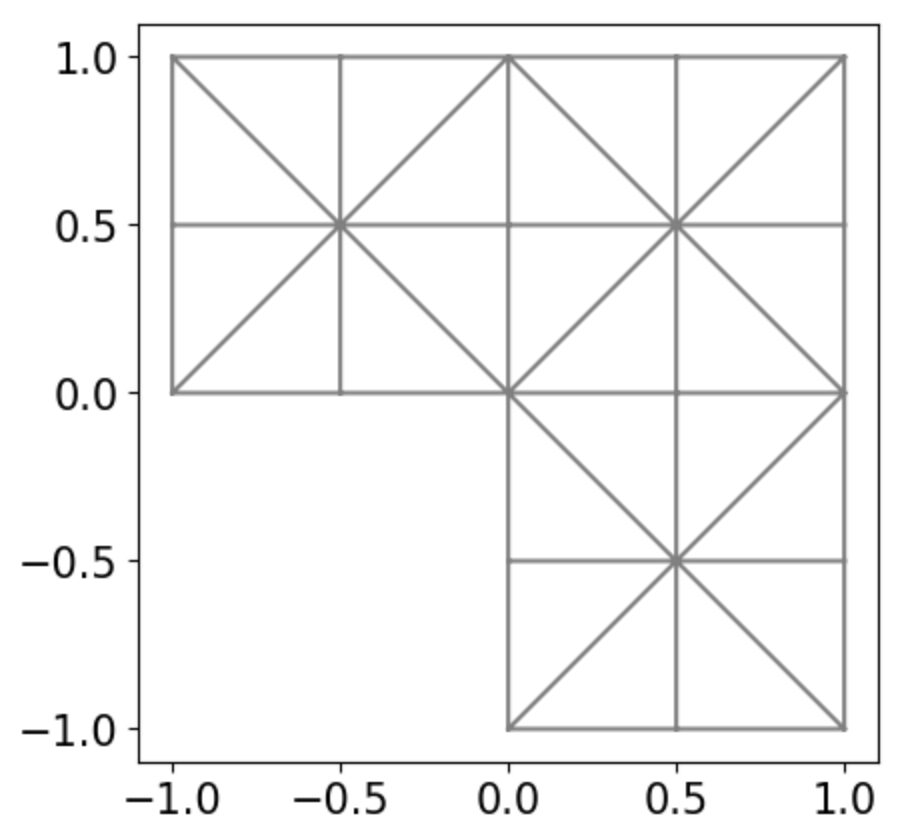}}~ 
    \subfloat[Refined mesh]{\includegraphics[width=0.29\linewidth]{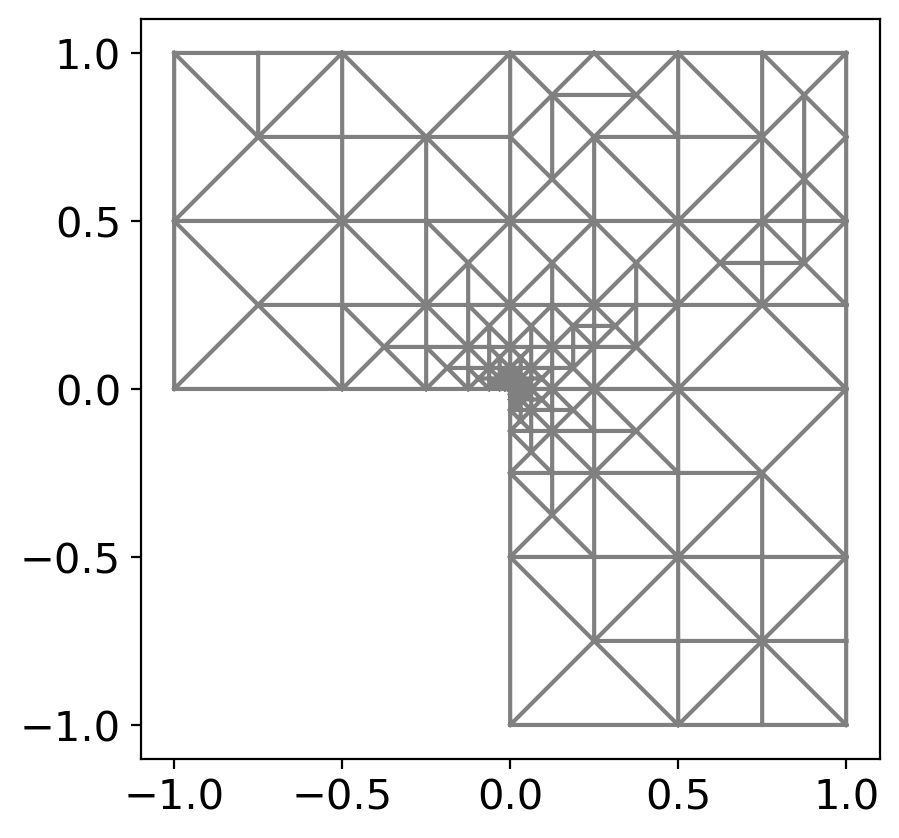}}~
    \subfloat[Pointwise absolute error]{\includegraphics[width=0.39\linewidth]{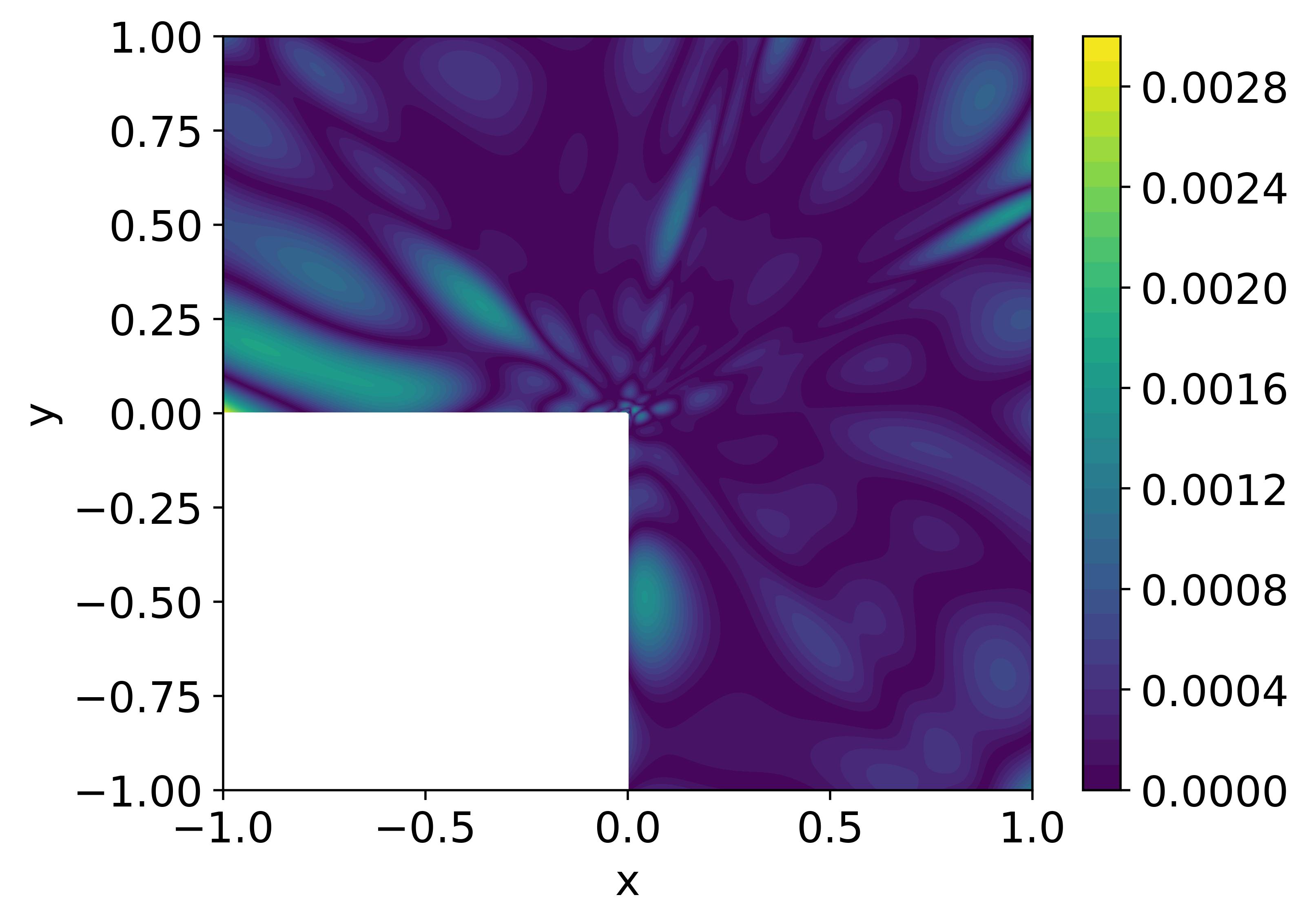}}
    \caption{Test-space meshes and pointwise absolute error for the singular solution.}
    \label{Figure: mesh}
\end{figure}

For the mesh refinement process, we compute two local discrepancy indicators that account for domain and boundary errors. For each element $T$, these indicators are defined as:
    \begin{equation}\label{Eq: two-grid indicator2}
        \iota_{k,\Omega}^2(T;u_\theta) := \|\hat\phi_{k}^\Omega(u_{\theta}) - \phi_{k}^\Omega(u_{\theta})\|_{V_\Omega(T)}^2,
    \end{equation}
and
    \begin{equation}\label{Eq: two-grid indicator3}
        \iota_{k,\Gamma}^2(T;u_\theta) := \|\hat\phi_{k}^\Gamma(u_{\theta}) - \phi_{k}^\Gamma(u_{\theta})\|_{V_\Gamma(T)}^2,
    \end{equation}
where $\hat\phi_k^\Omega(u_{\theta})$ and $\hat\phi_k^\Gamma(u_{\theta})$ are the discrete Riesz representatives with respect to the enriched test spaces $\hat{V}_k^\Omega \supset V_k^\Omega$ and $\hat{V}_k^\Gamma \supset V_k^\Gamma$, respectively. The enriched test spaces are chosen as $\hat{V}_k^\Omega:=\mathbb{P}_0^2(\mathcal{T}_k)$ and $\hat{V}_k^\Gamma := \mathbf{RT}^2(\mathcal{T}_k)$. The global estimator is defined by:
\begin{equation}\label{Eq: global estimator2}
    \iota_k(\mathcal{T}_k;u_\theta):=\sqrt{\sum_{T \in \mathcal{T}_k}\left(\iota_{k,\Omega}^2(T;u_\theta) + \iota_{k,\Gamma}^2(T;u_\theta)\right)}.
\end{equation}

We select elements for refinement using the D\"orfler marking criterion with $\gamma = 0.2$. We follow a separate marking strategy for the domain $\Omega$ and the boundary $\Gamma$. Let $\mathcal{M}_{k-1}^\Omega$ and $\mathcal{M}_{k-1}^\Gamma$ be the smallest subsets of marked elements using indicators $\iota_{k-1,\Omega}^2(T;u_{\theta^{k-1}})$ and $\iota_{k-1,\Gamma}^2(T;u_{\theta^{k-1}})$ from \eqref{Eq: two-grid indicator2} and \eqref{Eq: two-grid indicator3}, respectively. To balance refinement between the two regions, we define $m = \min(|\mathcal{M}_{k-1}^\Omega|,|\mathcal{M}_{k-1}^\Gamma|)$ and select the $m$ elements from each subset with the largest local discrepancies. These selected elements are then combined to form the final set of marked elements $\mathcal{M}_{k-1}$.

Figure \ref{Figure: mesh} (b) illustrates the refined mesh, consisting of $324$ elements. 
As expected, the adaptive refinement is concentrated around the singularity at the origin $(0,0)$. The pointwise absolute errors are shown in Figure \ref{Figure: mesh} (c). It is evident that the neural network approximates the solution with high precision, maintaining absolute errors below $3\times10^{-3}$ across the domain. Figure \ref{fig:error loss adaptive dof} (a) shows the relationship between the square root of the loss and the $H^1$-norm error.
The two curves remain nearly parallel throughout the training process, demonstrating the robustness of the loss. However, after about $10{,}000$ epochs, they begin to diverge slightly. 
This discrepancy arises from an integration error, since refinement is applied only to the test-space mesh. One possible approach to overcome this issue is to use separate meshes: one for the trial space to ensure more accurate integration, and another for the test space dedicated to adaptive refinement.

\begin{figure}[t]
    \centering
    \subfloat[$H^1$-error and $\sqrt{\mathrm{loss}}$ vs. epochs]{\includegraphics[width=0.31\linewidth]{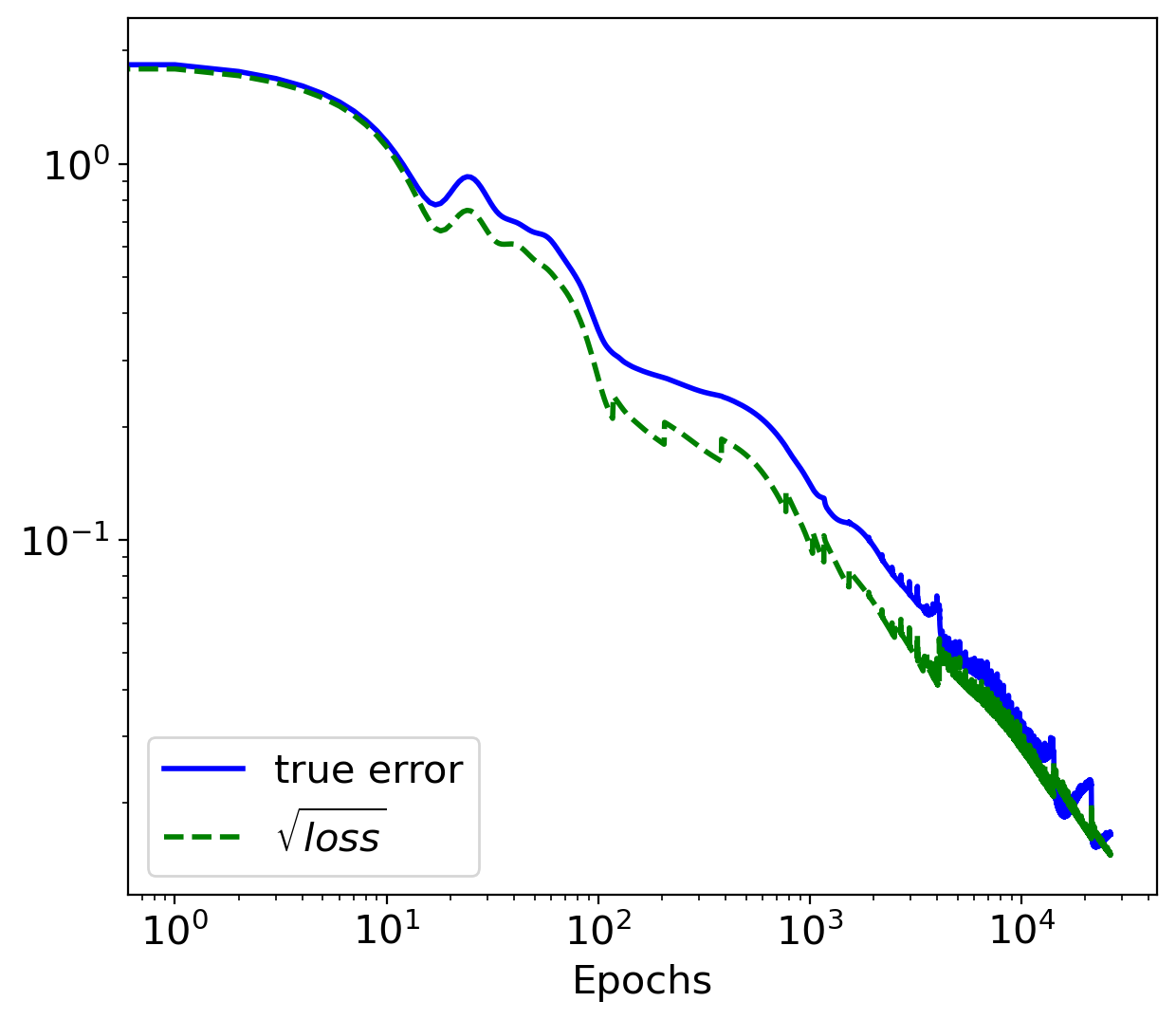}}~
    \subfloat[$H^1$-error, $\sqrt{\mathrm{loss}}$, and refinement indicator vs. test space dimension]{\includegraphics[width=0.31\linewidth]{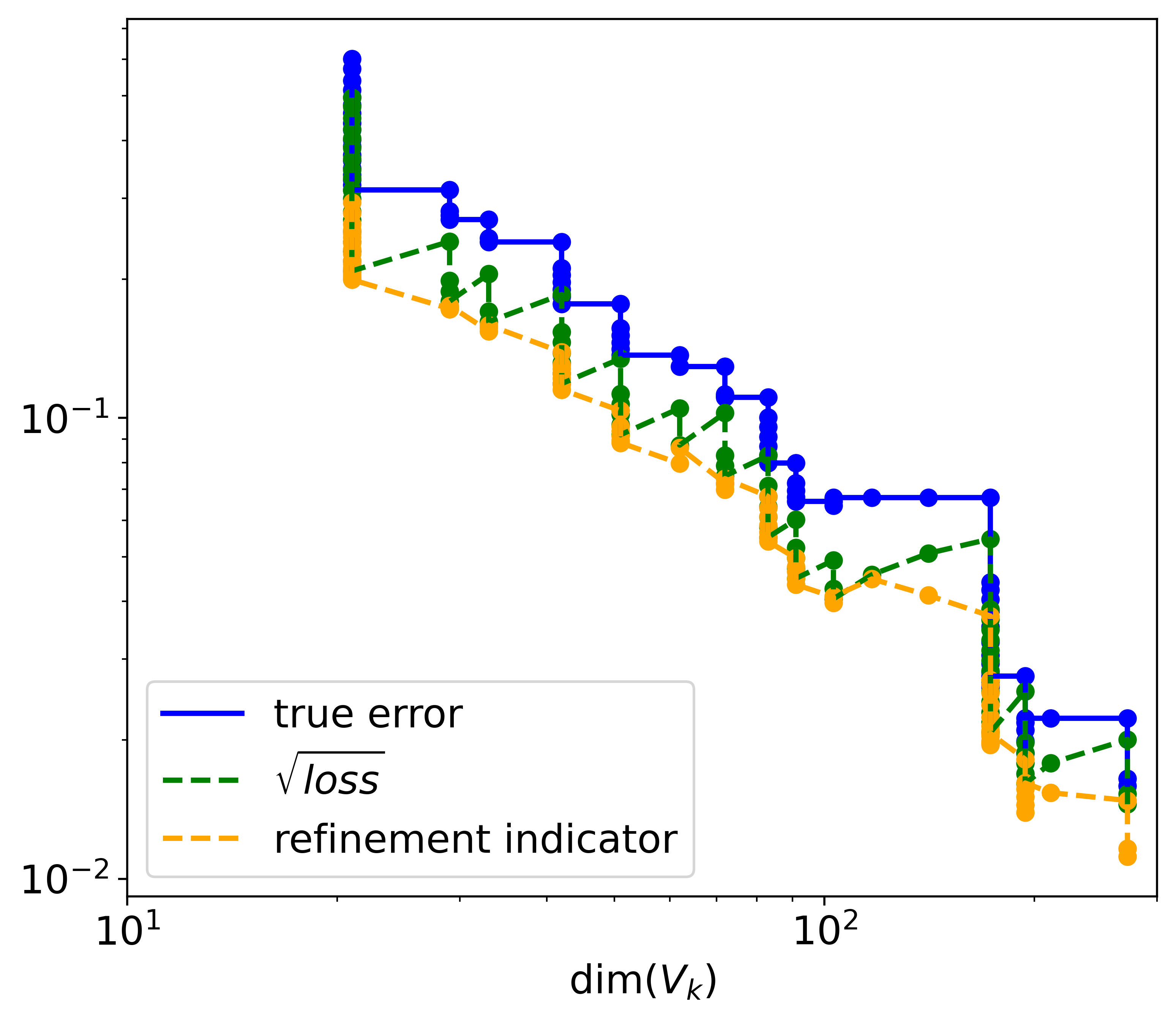}}~ 
    \subfloat[$H^1$-error for FEM and adaptive RVPINNs vs. test space dimension]{\includegraphics[width=0.32\linewidth]{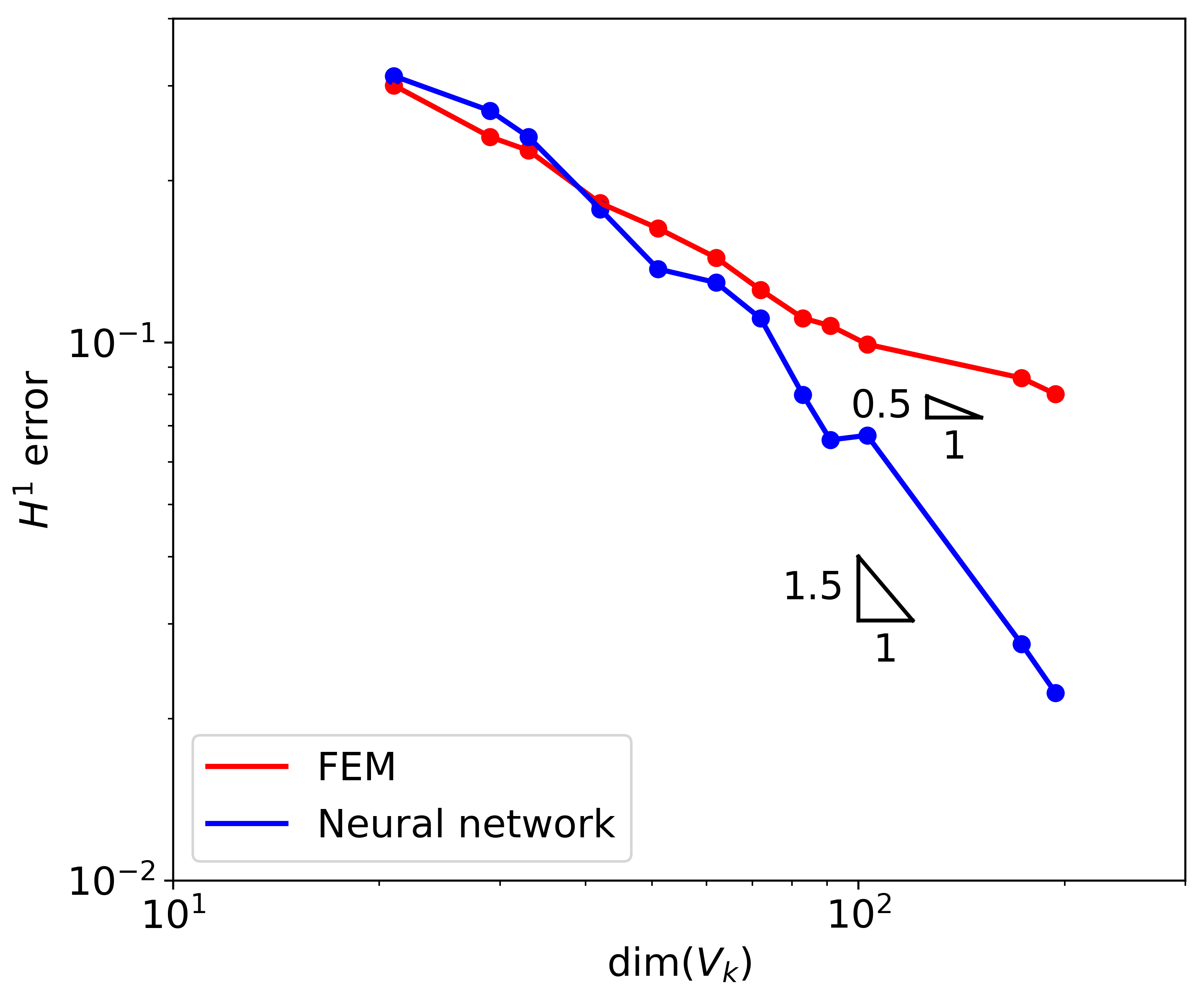}}
    \caption{Results for the singular solution.}
    \label{fig:error loss adaptive dof}
\end{figure}

Figure \ref{fig:error loss adaptive dof} (b) shows the behavior of the square root of the loss function and the refinement indicator as they reach the specified tolerances $\delta^k\epsilon_0$. Consistent with the observations for the smooth solution (Figure \ref{Figure: square Poisson} (c)), the mesh refinement initially increases the loss value, while simultaneously decreasing the refinement indicator. In contrast, neural network training minimizes the loss and generally reduces the indicator. Empirical results confirm that utilizing Algorithm \ref{Algorithm: Adaptive RVPINNs} yields a reliable reduction in the true error.

\pagebreak

Figure \ref{fig:error loss adaptive dof} (c) displays the $H^1$-norm error for both the adaptive RVPINN method and the FEM against the dimension of the discrete test space. The results demonstrate that adaptive RVPINNs significantly improve approximation accuracy. Notably, when $\dim(V_k) > 40$, the $H^1$-norm error of the adaptive RVPINNs is smaller than that of the FEM. Moreover, the adaptive RVPINNs exhibit a convergence rate of approximately $1.5$, which outperforms the $0.5$ rate of the adaptive FEM with continuous piecewise-linear trial and test functions. This superior performance is due to the high expressivity of the neural network architecture and the effectiveness of the refinement indicator.

\section{Conclusions}\label{Sec:Conc}

In this work, we established an upper bound for neural network approximation errors based on Riesz discrepancy and proved its equivalence to the bound based on the residual remainder term. We identified the error between the discrete and continuous Riesz representatives as a key component to ensure robustness during training with the RVPINN loss function. We introduced idealized and sequential adaptivity strategies, and established their convergence results. Furthermore, we proved an error estimate for a quasi-minimizer of the residual norm. Our analysis demonstrated that convergence depends on three primary pillars: the Riesz discrepancy, the neural network's approximability, and the nonlinear optimizer's efficiency in minimizing the loss function. 
We introduced a computable refinement indicator to minimize the Riesz discrepancy. This led to the development of the test-space adaptive RVPINN framework, which refines the test-space mesh to enhance the accuracy of the discrete Riesz representative in approximating its exact counterpart associated with the variational residual. 

We validated the effectiveness of this framework using Poisson problems with smooth and singular solutions, as well as an elliptic interface problem with a kink solution. Numerical results demonstrated that the proposed adaptive approach achieves smaller errors than the reference adaptive FEM, as shown in the plots against the test-space dimension.
Moreover, our approach achieves a better convergence rate with respect to the test-space dimension compared to the adaptive FEM. Furthermore, the results showed that the proposed method maintains a correlation between the loss function and the true error throughout training, demonstrating the robustness of the adaptive strategy.

Potential extensions of this work include developing adaptive strategies that extend the idealized setting by using, for instance, a truncated inner loop ($N<\infty$) to guarantee convergence. In addition, since numerical examples demonstrate that the adaptive process can yield convergence rates superior to those of adaptive FEM, further investigation into the optimal rates for neural network approximations would provide a more comprehensive understanding of the method.



\section*{Acknowledgments}
T. Udomworarat was supported by the Royal Thai Government Scholarship under the Office of Educational Affairs, the Royal Thai Embassy in London. The research by I. Brevis and K. G. van der Zee was supported by the Engineering and Physical Sciences Research Council (EPSRC), UK, under Grant EP/W010011/1. The work of S. Rojas was supported by ANID-FONDECYT project 1240643, and by the National Center for Artificial Intelligence CENIA FB210017, Basal ANID.

\bibliographystyle{siamplain}
\bibliography{references}
\end{document}